\documentclass[12pt]{amsart}
\usepackage{pinlabel}
\usepackage{amsfonts,amsmath,amscd,xcolor,graphicx,amssymb}
\usepackage{caption}
\usepackage{epstopdf}
\usepackage{a4wide}

\theoremstyle{plain}
\newtheorem{theorem}{Theorem}[section]

\newtheorem{lemma}[theorem]{Lemma}

\newtheorem*{Powell Conjecture}{Powell Conjecture}

\theoremstyle{definition}

\newcommand{\N}{\operatorname{N}}

\theoremstyle{remark}

\newtheorem{claim}{Claim}

\makeatletter
\@namedef{subjclassname@2020}{\textup{2020} Mathematics Subject Classification}
\makeatother

\begin{document}

\title[Reducing spheres for the genus-4 Heegaard surface in the 3-sphere]{A note on reducing spheres for the genus-4 Heegaard surface in the 3-sphere}

\author[S. Cho]{Sangbum Cho}
\thanks{The first-named author is supported by the National Research Foundation of Korea(NRF) grant funded by the Korea government(MSIT) (RS-2024-00456645).}
\address{Department of Mathematics Education, Hanyang University, Seoul 04763, Korea}
\email{scho@hanyang.ac.kr}

\author[Y. Koda]{Yuya Koda}
\thanks{The second-named author is supported by JSPS KAKENHI Grant Numbers JP23K20791, JP24K06744, and JP25K24558.}
\address{Department of Mathematics, Hiyoshi Campus, Keio University, Yokohama 223-8521, Japan, and
International Institute for Sustainability with Knotted Chiral Meta Matter (WPI-SKCM$^2$), Hiroshima University, 1-3-1 Kagamiyama, Higashi-Hiroshima, Hiroshima 739-8531, Japan}
\email{koda@keio.jp}

\author[J. H. Lee]{Jung Hoon Lee}
\thanks{The third-named author is supported by the National Research Foundation of Korea(NRF) grant funded by the Korea government(MSIT) (RS-2026-25469563).}
\address{Department of Mathematics and Institute of Pure and Applied Mathematics, Jeonbuk National University, Jeonju 54896, Korea}
\email{junghoon@jbnu.ac.kr}


\begin{abstract}
For the genus-$4$ Heegaard surface in the $3$-sphere, we present a sufficient condition for a non-separating weak reducing pair to be separated by a reducing sphere for the surface. As a consequence, we reduce the connectivity problem in the reducing sphere complex for the surface to the problem of showing that any two vertices, whose representative reducing spheres are disjoint from a fixed non-separating compressing disk for the surface, are connected in the complex.
\end{abstract}

\maketitle

\section{Introduction}\label{sec1}

A {\em Heegaard splitting}, denoted by $(V, W; \Sigma)$, of a closed orientable $3$-manifold $M$ is a decomposition of $M$ into two handlebodies $V$ and $W$ of the same genus, where $\Sigma = \partial V = \partial W$ is called a {\em Heegaard surface}.
If the genus of $\Sigma$ is $g$, then $(V, W; \Sigma)$ is called a {\em genus-$g$ Heegaard splitting}.
In this paper, we consider Heegaard splittings of the $3$-sphere $S^3$.
It is well known from Waldhausen \cite{Wal68} that $S^3$ admits a genus-$g$ Heegaard splitting unique up to isotopy for each $g \ge 0$.
 
Let $(V, W; \Sigma)$ be a genus-$g$ Heegaard splitting of $S^3$.
A {\em reducing sphere} for $\Sigma$ is a $2$-sphere $P$ in $S^3$ that meets $\Sigma$ in a single essential circle. 
We have a reducing sphere for $\Sigma$ only when the genus $g$ is at least $2$, and there exist infinitely many reducing spheres for $\Sigma$ for each $g \geq 2$.
A {\em compressing disk} for $\Sigma$ is an essential disk in $V$ or in $W$.
A pair of compressing disks $D$ and $E$ for $\Sigma$ is called a {\em weak reducing pair} for $\Sigma$ if $D \subset V$ and $E \subset W$, and $\partial D$ is disjoint from $\partial E$.
We denote the pair simply by $D - E$ or by $E - D$, and call each of $D$ and $E$ a {\em weak reducing disk}. 
Again, we have a weak reducing pair for $\Sigma$ only when $g \geq 2$, and there exist infinitely many such pairs for each $g \geq 2$.
When both of the disks $D$ and $E$ are non-separating, we call the pair $D - E$ simply a {\em non-separating weak reducing pair}.

We say that a reducing sphere $P$ {\em separates} a weak reducing pair $D - E$ if $P$ is disjoint from $D \cup E$ and if $D$ and $E$ are contained in different components cut off by $P$.
Given any reducing sphere $P$, one can find easily a non-separating weak reducing pair separated by $P$ by a suitable choice of compressing disks for $\Sigma$ in different components cut off by $P$.
But, as shown in \cite{CKL25}, the converse holds only when $g \leq 3$. 
That is, given any non-separating weak reducing pair, there always exists a reducing sphere that separates the pair only when $g \leq 3$, and if $g \geq 4$, there exists a non-separating weak reducing pair (in fact, infinitely many such pairs) that admits no reducing sphere separating the pair.

Our first main result is to provide a reasonable sufficient condition for a non-separating weak reducing pair to be separated by a reducing sphere when $g = 4$, stated as follows.

\begin{lemma}\label{lem1.1} 
Let $(V, W; \Sigma)$ be the genus-$4$ Heegaard splitting of $S^3$, and let $D - E$ be a non-separating weak reducing pair for $\Sigma$.
If the surface obtained by compressing $\Sigma$ along $D \cup E$ is still compressible to both sides, then there exists a reducing sphere $P$ that separates the pair $D - E$.
\end{lemma}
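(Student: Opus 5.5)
We compress $\Sigma$ along $D \cup E$ and obtain a closed surface $\Sigma_{DE}$ of genus $2$. It lies in $S^3$ and bounds $V_D = V$ cut along $D$ (a genus-3 handlebody) with the $2$-handle $N(E)$ attached; similarly on the other side. Equivalently, $\Sigma_{DE}$ splits $S^3$ into $V' = \mathrm{cl}(V - N(D)) \cup N(E)$ and $W' = \mathrm{cl}(W - N(E)) \cup N(D)$. This is the usual untelescoping picture: $\Sigma$ is obtained from $\Sigma_{DE}$ by tubing along the cocore arcs of $N(D)$ and $N(E)$. The hypothesis says $\Sigma_{DE}$ is compressible in both $V'$ and $W'$.

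The plan has four steps.

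\textbf{Step 1: find a weak reducing pair for $\Sigma_{DE}$ away from the cocores.} We pick compressing disks $D' \subset V'$ and $E' \subset W'$ for $\Sigma_{DE}$. We isotope them off the $2$-handles $N(E)$ and $N(D)$ by an innermost-disk / outermost-arc argument, which is standard since the cocores are arcs. Then $D'$ becomes a compressing disk in $V$ disjoint from $D$, and $E'$ a compressing disk in $W$ disjoint from $E$.

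\textbf{Step 2: show $\Sigma_{DE}$ is a genus-2 Heegaard surface.} In $S^3$, the genus-2 surface $\Sigma_{DE}$ compressible on both sides must be a Heegaard surface of $S^3$. We argue this via Waldhausen or directly: compressing to a torus, each complementary piece is a solid torus or knot exterior, and double compressibility forces handlebodies by Haken's lemma. This step needs care. $V'$ and $W'$ are each obtained from handlebodies by adding a $2$-handle, so $V'$ is a compression body-like piece whose boundary compresses. Since we live in $S^3$, the easier route is this: the compression of $\Sigma$ along $D$, $E$ gives a genus-2 surface separating $S^3$ into $V'$ and $W'$, and $V'$ has the $2$-handle's cocore giving a compressing... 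Rather, we use the known fact that a closed surface in $S^3$ with both sides having compressible boundary, and with $\pi_1$ of each side free (each is a handlebody plus a $2$-handle whose complement is a handlebody), yields a Heegaard splitting. We expect to justify this by Haken's lemma applied to $\Sigma$ together with the $2$-handle structure.

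\textbf{Step 3: apply the genus-$\le 3$ result.} Once $(V', W'; \Sigma_{DE})$ is a genus-2 Heegaard splitting of $S^3$, Waldhausen's theorem gives a reducing sphere $Q$ for $\Sigma_{DE}$ meeting it in one essential separating circle. We choose $Q$ so that the two feet of the cocore arc of $N(D)$ lie on one side of $Q$ and those of $N(E)$ lie on the other. To get this, we treat the cocore arcs as unknotted arcs in $V'$ and $W'$ (they are trivial, being cocores of $2$-handles attached to handlebodies). The data then corresponds to a stabilized genus-4 splitting, and we apply the genus-$\le 3$ separation result of \cite{CKL25} to an intermediate genus-3 surface, namely $\Sigma$ compressed along only $D$ (or only $E$). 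Concretely, we apply \cite{CKL25} to the genus-3 splitting obtained by compressing along $D$ with the pair $D'$--$E$, to find a reducing sphere. We then tube back, keeping $N(D)$ on one side.

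\textbf{Step 4: tube back.} Finally we tube $Q$ back to $\Sigma$. Since $Q$ avoids the cocores, $P = Q$ meets $\Sigma$ in the same single essential circle, and it separates $D$ (near the $N(D)$ cocore side) from $E$.

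\textbf{Main obstacle.} The main obstacle is arranging the sphere $Q$ to avoid both cocore arcs with the arcs on opposite sides. Both cocores must become separated, and the counterexamples of \cite{CKL25} show this fails without the double compressibility hypothesis. We expect to use $D'$ and $E'$ to move the arc endpoints: sliding the cocore of $N(E)$ over $D'$ and using $E'$ for $N(D)$ reduces the problem to the genus-3 case of \cite{CKL25}.
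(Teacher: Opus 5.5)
\textbf{There is a genuine gap: your Step 2 is false, and it fails exactly in the case that carries the content of the lemma.}

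A genus-$2$ surface in $S^3$ compressible to both sides need not be a Heegaard surface. For a counterexample, tube $\partial \N(K)$, for a nontrivial knot $K$, to an unknotted torus far away. Your ``free $\pi_1$'' justification also fails, since attaching a $2$-handle to a handlebody gives a one-relator group, which need not be free.

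More to the point, by Gordon's theorem (Lemma~\ref{lem5.1}), $V' = \overline{V - \N(D)} \cup \N(E)$ is a handlebody only if $\partial E$ is primitive in $\partial(\overline{V - \N(D)})$, i.e.\ only if $E$ is primitive. Likewise $W'$ is a handlebody only if $D$ is primitive. But if, say, $D$ is primitive, the lemma is immediate: Lemma~\ref{lem2.1} gives a dual disk $\Delta$ of $D$ disjoint from $E$, and $\partial \N(D \cup \Delta)$ separates $D - E$. So in the only nontrivial case, $D$ and $E$ are both non-primitive and $\Sigma_{DE}$ is \emph{not} Heegaard. If Step 2 held, Steps 3--4 would be unnecessary.

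Step 3 breaks for the same reason. $\Sigma/D$ bounds $W \cup \N(D)$, which is a handlebody only when $D$ is primitive, so the genus-$3$ result of \cite{CKL25} cannot be applied to it.

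Step 1 has two further problems. First, $D'$ and $E'$ are chosen independently, so nothing makes them disjoint. Second, pushing a compressing disk off the $2$-handle is not a routine innermost/outermost argument: the cocore is an arc, and the core $E$ is not properly embedded in $V'$. The paper needs the Casson--Gordon $\partial$-version of Haken's lemma (Lemma~\ref{lem3.1}) plus a product-structure isotopy for this.

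\textbf{The missing idea is to exploit the failure of Heegaardness rather than deny it.} Since $\Sigma_{DE}$ is not a Heegaard surface, it is not topologically minimal (Lemma~\ref{lem4.4}, via Bachman's theorem that a maximal compression of a topologically minimal surface is incompressible). Hence it is not strongly irreducible, and being compressible to both sides, it is weakly reducible. This supplies a \emph{disjoint} pair $D_1 \subset V'$, $E_1 \subset W'$.

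The paper then proceeds as follows:
\begin{enumerate}
\item It pushes $D_1$ and $E_1$ off the $2$-handles.
\item It removes intersections of their boundaries by band sums with $D$ and $E$ across innermost bigons.
\item It splits into cases according to whether $\partial D_1$ and $\partial E_1$ separate $\Sigma_{DE}$.
\end{enumerate}
Each case ends in one of three ways:
\begin{enumerate}
\item the separating reducing sphere is produced, mostly via Lemma~\ref{lem2.4} applied to a $2$-sphere component of a further compression;
\item a non-separating closed surface in $S^3$ appears, which is a contradiction;
\item a genus-$3$ summand appears, on which Lemma~\ref{lem2.5} forces $D$ or $E$ to be primitive, again a contradiction.
\end{enumerate}
The genus-$3$ input thus enters only through genuine genus-$3$ Heegaard summands cut off by a reducing sphere, never through $\Sigma/D$.
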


We remark that, in the setting of Lemma~\ref{lem1.1}, the surface obtained by compressing $ \Sigma $ along $ D \cup E $ is connected, since both $ D $ and $ E $ are non-separating. 
This will be verified at the beginning of the proof of Lemma~\ref{lem1.1} in Section~\ref{sec6}.

In the authors' previous work \cite{CKL24}, the existence of a reducing sphere separating a given non-separating weak reducing pair plays a key role to show the connectivity of the reducing sphere complex for the genus-$3$ Heegaard surface $\Sigma$ in $S^3$.
In general, when $g \geq 3$, the {\em reducing sphere complex}, denoted by $\mathcal{R}(\Sigma)$, for the genus-$g$ Heegaard surface $\Sigma$ in $S^3$ is defined as follows.
The vertices of $\mathcal{R}(\Sigma)$ are the isotopy classes of reducing spheres for $\Sigma$, and a collection $\{v_0, v_1, \ldots, v_k\}$ of $k+1$ distinct vertices spans a $k$-simplex of $\mathcal{R}(\Sigma)$ if there exist representative spheres $P_0, P_1, \ldots, P_k$ of $v_0, v_1, \ldots, v_k$, respectively, satisfying that $|P_i \cap P_j \cap \Sigma_g| = 0$, for each $i, j \in \{0, 1, \ldots, k\}$ with $i \neq j$.

It is still an open question whether $\mathcal{R}(\Sigma)$ is connected or not if $g \geq 4$.
But, in the case of $g = 4$, Lemma \ref{lem1.1} enables us to reduce the problem of the connectivity of $\mathcal{R}(\Sigma)$ to the problem of showing that any two vertices, whose representative reducing spheres are disjoint from a fixed non-separating compressing disk for the surface, are connected in the complex.
The following is our second main result.

\begin{theorem}\label{thm1.2}
Let $(V, W; \Sigma)$ be a genus-$4$ Heegaard splitting of $S^3$.
Then the reducing sphere complex $\mathcal{R}(\Sigma)$ for $\Sigma$ is connected if and only if the following holds.
\begin{itemize}
\item Let $E$ be any non-separating compressing disk for $\Sigma$.
If $P$ and $Q$ are reducing spheres for $\Sigma$ disjoint from $E$, then the vertices of $\mathcal{R}(\Sigma)$ represented by $P$ and $Q$ belong to the same component of $\mathcal{R}(\Sigma)$.
\end{itemize}
\end{theorem}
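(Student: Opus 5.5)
The "only if" direction is immediate: if $\mathcal{R}(\Sigma)$ is connected, then any two of its vertices lie in the same component, in particular those represented by $P$ and $Q$. All the work is in the "if" direction. Assume the condition holds, and let $P$ and $Q$ be arbitrary reducing spheres. The plan is to join them by a sequence of reducing spheres $P=P_0, P_1, \ldots, P_n = Q$ in which each consecutive pair $P_{i-1}, P_i$ is disjoint from a common non-separating compressing disk $E_i$. The hypothesis then puts $P_{i-1}$ and $P_i$ in the same component for every $i$, and so $P$ and $Q$ lie in the same component.

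To build the chain, I would use non-separating weak reducing pairs as intermediaries. For $P$, choose a non-separating weak reducing pair $D_0 - E_0$ separated by $P$, as in the remark in the introduction. Similarly choose $D' - E'$ separated by $Q$. Then connect these two pairs through a path in a "weak reducing pair graph." Its vertices are non-separating weak reducing pairs $D - E$ such that compressing $\Sigma$ along $D\cup E$ leaves a surface compressible to both sides. Two vertices are adjacent when they share a disk, say $D - E$ and $D - E'$. By Lemma~\ref{lem1.1}, each vertex $D_j - E_j$ is separated by some reducing sphere $R_j$. For adjacent vertices sharing $E$, both $R_j$ and $R_{j+1}$ miss the non-separating disk $E$, so the hypothesis makes them equivalent. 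When the shared disk lies in $V$, I apply the hypothesis with the roles of $V$ and $W$ exchanged, which is allowed because $E$ in the statement is any non-separating compressing disk on either side. The endpoints also fit: $P$ and $R_0$ both miss $E_0$, since $P$ separates $D_0 - E_0$. I would also verify that the pair $D_0 - E_0$ chosen from $P$ satisfies the compressibility condition. This holds because $P$ cuts $\Sigma$ into a genus-$g_1$ and a genus-$g_2$ summand, and after compressing once on each side the leftover handles still provide compressions on both sides, provided the disks are chosen inside a standard summand of genus at least $2$. When $g_1 = 1$ or $g_2 = 1$, I would instead choose $D_0$ and $E_0$ in the larger summand.

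The main obstacle is showing that this graph is connected between the pairs coming from $P$ and from $Q$. My first approach would fix the disk $D_0 \subset V$ and then use the standard connectivity of the non-separating disk complex of the handlebody $W$ relative to $\partial D_0$. Alternatively, I would argue by surgery: choose disks $E$, $E'$ in $W$ disjoint from $\partial D_0$ and surger $E'$ along outermost subdisks of $E \cap E'$, reducing intersection. At each surgery step one must keep the compressed surface compressible to both sides. For genus $4$, compressing along $D\cup E$ gives a genus-$2$ Heegaard surface of $S^3$, which by Waldhausen is standard and hence compressible to both sides. This suggests the compressibility condition may hold automatically whenever the compressed surface is a Heegaard surface, meaning it bounds handlebodies on both sides. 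The real difficulty is that compressing along a weak reducing pair generally does not yield a Heegaard surface, which is exactly why \cite{CKL25} produces bad pairs. So I would restrict to paths inside the subcomplex of pairs $D-E$ in which $E$ is disjoint from a fixed complete "standard" system of disks. In that subcomplex compressibility is visible, and any two standard systems can be linked by handle-slides. Controlling this step is where I expect the bulk of the effort to go.
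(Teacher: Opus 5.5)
Your outer framework is sound and matches the paper's. Take a chain of non-separating weak reducing pairs in which consecutive pairs share a disk and each pair is separated by a reducing sphere. The hypothesis, applied to the shared non-separating disk on whichever side it lies, then links consecutive spheres. Your endpoint step also works, since only disjointness of $P$ and $R_0$ from $E_0$ is needed.

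The gap is the step you flag yourself. You never show that your graph connects the pair coming from $P$ to the pair coming from $Q$, and this is the whole content of the theorem. A path in your graph is just a weak reducing sequence in which every consecutive pair has $\Sigma/(D\cup E)$ compressible to both sides, and you give no way of producing one.
\begin{itemize}
\item Fixing $D_0$ and moving only the $W$-disk can never reach a pair whose $V$-disk is $D'$.
\item The ``standard system plus handle-slides'' idea gives no mechanism for keeping each intermediate pair a weak reducing pair, let alone for keeping the compressibility condition.
\item Your sentence that compressing a genus-$4$ $\Sigma$ along $D\cup E$ yields a genus-$2$ Heegaard surface is false in general. By Lemma~\ref{lem5.1}, if $\Sigma/(D\cup E)$ bounds handlebodies on both sides, then both $D$ and $E$ are primitive. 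The non-primitive case is exactly the one Lemma~\ref{lem1.1} must handle.
\end{itemize}

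The paper supplies the missing step with two ingredients.

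First, a global connectivity input. Since $\pi_1(\mathcal{D}(\Sigma))=1$ \cite[Appendix A]{CKL24}, $\Sigma$ is not critical. Lemma~\ref{lem4.1} then gives a weak reducing sequence between primitive disks $D, D' \subset V$ disjoint from $P$ and $Q$. Lemmas~\ref{lem5.2} and \ref{lem5.3} make all its disks non-separating, and one takes the sequence of minimal length.

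Second, and this replaces your attempt to control compressibility along a path, the hypothesis of Lemma~\ref{lem1.1} holds automatically for every interior pair $D_i - E_{i+1}$.
\begin{itemize}
\item Surgering $E_{i+1}$ along an outermost subdisk of $E_{i-1}$, or taking $E_{i-1}$ itself if they are disjoint, gives a non-separating $E_0$ disjoint from $D_i \cup E_{i+1}$ and not isotopic to $E_{i+1}$.
\item Similarly one gets $D_0$ from $D_{i+2}$.
\item Lemma~\ref{lem7.1} then shows, by a scar count in $\Sigma/(D_i\cup E_{i+1})$, that $\partial D_0$ and $\partial E_0$ are essential there, assuming neither $D_i$ nor $E_{i+1}$ is primitive. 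If one is primitive, a dual disk gives the separating sphere directly.
\end{itemize}
The two end pairs contain the primitive disks $D$ and $D'$, so they are separated by spheres built from dual disks, and Lemma~\ref{lem2.3} links these to $P$ and $Q$. Without an argument of this kind, producing both the connecting chain and the compressibility condition along it, your proof is incomplete.
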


The connectivity of the reducing sphere complex is closely related to a well known open question, called the Powell conjecture. 
The conjecture states that the genus-$g$ Goeritz group of $S^3$ is generated by four specific elements for each $g \geq 3$.
Here, the genus-$g$ {\em Goeritz group} of $S^3$, denoted by $\mathcal{G}_g$, is the group of isotopy classes of orientation-preserving self-homeomorphisms of $(S^3, \Sigma_g)$ that preserve $V$ and $W$ setwise, where $\Sigma_g$ is the genus-$g$ Heegaard surface that splits $S^3$ into two handlebodies $V$ and $W$.

For each $g \geq 3$, Zupan \cite{Zupan} showed that the Powell conjecture for $\mathcal{G}_k$ holds for all $3 \leq k \leq g$ if and only if the reducing sphere complex $\mathcal{R}(\Sigma_k)$ for $\Sigma_k$ is connected for all $3 \leq k \leq g$.
In \cite{CKL24}, it was shown that $\mathcal{R}(\Sigma_3)$ is connected, and hence the conjecture for $\mathcal{G}_3$ holds.
But, as stated above, we do not know whether the complex $\mathcal{R}(\Sigma_g)$ is connected or not for each $g \geq 4$, and so the conjecture is still open for $\mathcal{G}_g$ for each $g \geq 4$.
For a brief history of the Goeritz groups of $S^3$ and the Powell conjecture, we refer the reader to the introduction of \cite{CKL24}.

\medskip

Throughout the paper, all $3$-manifolds are assumed to be orientable.
For a subspace $X$ of a topological space, we let $\N(X)$, $\partial X$, and $\overline{X}$ denote a regular neighborhood, the boundary, and the closure of $X$, respectively, where the ambient space will always be clear from the context. 
For a surface $F$ in a $3$-manifold and a compressing disk $\Delta$ (or a union $\Delta$ of mutually disjoint compressing disks), we denote by $F / \Delta$ the surface obtained by compressing $F$ along $\Delta$.

\section{Preliminaries}\label{sec2}

Let $(V, W; \Sigma)$ be a genus-$g$ Heegaard splitting of $S^3$ with $g \ge 3$.
Let $P$ and $Q$ be reducing spheres for $\Sigma$. 
When the vertices of $\mathcal{R}(\Sigma)$ represented by $P$ and $Q$ belong to the same component of $\mathcal{R}(\Sigma)$, we simply write $P \sim Q$.
An essential disk $D$ in $V$ is called a {\em primitive disk} if there exists an essential disk $E$ in $W$ such that $\partial D$ and $\partial E$ intersect transversely in a single point.
Such a disk $E$ is called a {\em dual disk} of $D$.
Of course, $E$ is also a primitive disk, and $D$ is a dual disk of $E$.
The pair of a primitive disk $D$ and its dual disk $E$ is called a {\em dual pair}, and is denoted by $(D, E)$ or $(E, D)$.
The $2$-sphere $\partial \N(D \cup E)$ is a reducing sphere for $\Sigma$, which we will call a reducing sphere {\em associated with} the dual pair $(D, E)$.
Any primitive disk is necessarily non-separating, and in fact, is a member of a non-separating weak reducing pair since, for a dual pair $(D, E)$, we can choose a suitable non-separating compressing disk for $\Sigma$ outside $\N(D \cup E)$ that forms a weak reducing pair with $D$ (or $E$).

\begin{lemma}\label{lem2.1}
Let $D$ be a primitive disk in $V$.
Let $\Delta$ be an essential disk in $W$ that is disjoint from $D$.
Then there exists a dual disk $E$ of $D$ that is disjoint from $\Delta$. 
\end{lemma}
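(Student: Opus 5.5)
The plan is a standard outermost-arc surgery argument. Since $D$ is primitive, fix some dual disk $E_0$ of $D$, so that $|\partial D \cap \partial E_0| = 1$. Among all dual disks of $D$, choose $E$ so that $|E \cap \Delta|$ is minimal, with $E$ and $\Delta$ in general position. Both are essential disks in the irreducible handlebody $W$, so simple closed curves of $E \cap \Delta$ can be removed by an innermost disk argument. These removals change neither $\partial E$ nor the duality with $D$. Hence we may assume $E \cap \Delta$ consists only of arcs. We then argue that there are no arcs at all, which gives the lemma.

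Suppose $E \cap \Delta \neq \emptyset$. Take an arc $\alpha$ of $E \cap \Delta$ that is outermost in $\Delta$. It cuts off a disk $\delta \subset \Delta$ with $\delta \cap E = \alpha$ and $\partial \delta = \alpha \cup \beta$, where $\beta \subset \partial \Delta$. Since $\Delta \cap D = \emptyset$, the arc $\beta$ lies in $\Sigma$ and is disjoint from $\partial D$. The arc $\alpha$ cuts $E$ into two disks $E'$ and $E''$. Exactly one of $\partial E'$, $\partial E''$ contains the single point $p = \partial D \cap \partial E$. Say it is $E'$, and set $E_1 = E' \cup \delta$, pushed slightly off $\delta$.

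Then $\partial E_1$ meets $\partial D$ exactly once, at $p$, because $\beta$ misses $\partial D$. A disk in $W$ whose boundary meets $\partial D$ transversely in one point cannot be inessential, since an inessential boundary would bound a disk in $\Sigma$ and meet $\partial D$ an even number of times. So $E_1$ is an essential disk in $W$ and is again a dual disk of $D$. Moreover $|E_1 \cap \Delta| < |E \cap \Delta|$, since at least the arc $\alpha$ disappears after the isotopy off $\delta$ and no new intersections are created. This contradicts the minimality of $E$. Therefore $E \cap \Delta = \emptyset$, and $E$ is the required dual disk.

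The main point needing care is the choice of which half of $E$ to keep. We must keep the half containing $p$, and then check that the resulting disk is genuinely dual. The parity argument handles essentiality, so this should not be a serious obstacle. The only other detail is confirming that the band-sum or isotopy does not reintroduce intersections with $\Delta$, which follows from $\delta$ being outermost.
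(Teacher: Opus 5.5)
Your proposal is correct and follows the paper's proof essentially step for step. You minimize $|E\cap\Delta|$ over dual disks of $D$, surger along an arc of $E\cap\Delta$ that is outermost in $\Delta$, keep the half of $E$ containing $\partial D\cap\partial E$, and contradict minimality; your extra remarks (removing circle intersections first, and the parity argument showing the surgered disk is essential) fill in details the paper leaves implicit.
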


\begin{proof}
Take a dual disk $E$ of $D$ so that $| E \cap \Delta |$ is minimal.
Suppose that $| E \cap \Delta | > 0$.
Let $\Delta_0$ be an outermost subdisk of $\Delta$ cut off by an arc $\alpha$ of $E \cap \Delta$ that is outermost in $\Delta$. 
The arc $\alpha$ cuts $E$ into two subdisks $E_1$ and $E_2$.
Let $E_1$ be the one that has the point $\partial D \cap \partial E$. 
Then $E' = E_1 \cup \Delta_0$ is a dual disk of $D$.
By a slight isotopy of $E'$, we have $| E' \cap \Delta | < | E \cap \Delta |$ since at least the arc $\alpha$ no longer counts.
It contradicts the minimality of $| E \cap \Delta |$, and hence $| E \cap \Delta | = 0$.
\end{proof}

Two dual pairs $(D, E)$ and $(D', E')$ are said to be {\em p-connected} if there exists a sequence of dual pairs $(D, E) = (D_1, E_1), (D_2, E_2), \ldots, (D_n, E_n) = (D', E')$ such that $D_i \cup E_i$ and $D_{i+1} \cup E_{i+1}$ are disjoint for each $i \in \{1, 2, \ldots, n-1\}$.
The following lemma, shown in \cite{CKL24}, says that any two dual pairs that share a common member are p-connected.

\begin{lemma}\cite[Lemma 2.2]{CKL24}\label{lem2.2}
Let $D$ be a common dual disk of two primitive disks $E$ and $E'$.
Then the dual pairs $(D, E)$ and $(D, E')$ are p-connected.
\end{lemma}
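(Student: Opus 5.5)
The plan is to argue by induction on $|E \cap E'|$, where $E$ and $E'$ are isotoped to meet transversely and minimally (so there are no circle components and every component is an arc). Without loss of generality $D \subset W$ and $E, E' \subset V$, with $|\partial D \cap \partial E| = |\partial D \cap \partial E'| = 1$. Throughout I would use that p-connectedness is an equivalence relation (concatenate sequences), so it suffices to connect through intermediate dual pairs.

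\emph{Inductive step} ($|E \cap E'| > 0$). The idea is the outermost-arc surgery from the proof of Lemma~\ref{lem2.1}.
\begin{itemize}
\item Since $E \cap E'$ consists of arcs, $E'$ contains at least two outermost subdisks cut off by arcs of $E \cap E'$. The single point $\partial D \cap \partial E'$ lies on the boundary of at most one of them.
\item So I can choose an outermost subdisk $\Delta_0 \subset E'$, cut off by an arc $\alpha$, whose boundary arc on $\Sigma$ misses $\partial D$.
\item The arc $\alpha$ cuts $E$ into $E_1$ and $E_2$; let $E_1$ be the piece containing the point $\partial D \cap \partial E$. Put $E'' = E_1 \cup \Delta_0$.
\item Because $\Delta_0 \cap \partial D = \emptyset$, the boundary $\partial E''$ meets $\partial D$ exactly once. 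Hence $E''$ is essential and is a dual disk of $D$.
\item After a slight isotopy, $E''$ is disjoint from $E$ and $|E'' \cap E'| < |E \cap E'|$.
\end{itemize}
By the base case, $(D,E)$ and $(D,E'')$ are p-connected. By the induction hypothesis, $(D,E'')$ and $(D,E')$ are p-connected, which completes the step.

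\emph{Base case} ($E \cap E' = \emptyset$). This is the step I expect to be the main obstacle: I must produce a dual pair $(D'',E''')$ disjoint from $D \cup E \cup E'$. Then $(D,E),(D'',E'''),(D,E')$ is the required sequence.
\begin{itemize}
\item Let $N = \N(D \cup E \cup E')$. The set $D \cup E \cup E'$ is contractible (two disks glued to $D$ along single boundary arcs), so $N$ is a $3$-ball, and it meets $\Sigma$ in a regular neighborhood of the graph $\partial D \cup \partial E \cup \partial E'$.
\item The first thing to check is that the Heegaard surface restricted to the complement of $N$ still carries genus. I expect $(V,W;\Sigma)$ cut along $N$ to be a Heegaard splitting of the complementary ball of genus at least $g-2 \geq 1$, with $\partial N$ cut by $\Sigma$ into possibly several curves.
\item To control this, I would first replace $E'$ by the band sum $E^*$ of $E$ and $E'$ along the subarc of $\partial D$ joining them. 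The disk $E^*$ lies in $V$ and is disjoint from $D$, $E$ and $E'$.
\item Then I would work inside the genus-$(g-1)$ Heegaard splitting of the complementary ball of the reducing sphere $\partial \N(D \cup E)$. In that splitting $E^*$ is an essential disk, and $E'$ is recovered from $E$ and $E^*$ by a band that can be kept in a small neighborhood.
\item Since $g-1 \geq 2$, this splitting is a stabilized splitting of a ball. Choosing a dual pair there disjoint from $E^*$ (via Lemma~\ref{lem2.1}-type outermost-arc arguments) and away from the band then gives a dual pair disjoint from $D \cup E \cup E'$.
\end{itemize}
Making this last choice rigorous is where the hypothesis $g \geq 3$ is used, and it is the delicate point of the argument.
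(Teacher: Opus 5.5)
\paragraph{The inductive step is sound; the base case fails.}
Taking an outermost subdisk $\Delta_0\subset E'$ whose outer arc misses the point $\partial D\cap\partial E'$, and setting $E''=E_1\cup\Delta_0$, gives a dual disk of $D$ disjoint from $E$ with $|E''\cap E'|<|E\cap E'|$. So everything reduces to the case $E\cap E'=\emptyset$. The gap is in that base case, and it is not just a matter of rigor. The claim you try to establish there, that some dual pair is disjoint from $D\cup E\cup E'$, is false in general, already for $g=3$.

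Any such pair misses a small $\N(D\cup E)$. It is therefore a dual pair of the genus-$(g-1)$ splitting obtained by destabilizing along $(D,E)$, and it misses the image $\hat E'$ of $E'$ there (your $E^*$ is isotopic to $\hat E'$ in that splitting). Conversely, every essential disk $F$ of a genus-$(g-1)$ splitting occurs as such an $\hat E'$: stabilize inside a small ball about a point of $\partial F$ so that the new dual pair $(D,E)$ has $|\partial D\cap\partial F|=1$ and $\partial E\cap\partial F=\emptyset$, and put $E'=F$.

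\paragraph{A counterexample with $g=3$.}
Let $\hat V=\N(K\cup\tau)$, with $K$ a trefoil and $\tau$ an unknotting tunnel; this is the genus-$2$ splitting of $S^3$, with complementary handlebody $\hat W$. Let $F$ be the cocore disk of $\tau$. Suppose a dual pair of this genus-$2$ splitting missed $F$. Its reducing sphere would cut the splitting into two genus-$1$ summands, with $F$ a meridian disk of one of them. Then $F$ would be primitive, and $\hat W\cup\N(F)$ would be a solid torus. But $\hat W\cup\N(F)$ is the exterior of $K$. Hence, for the resulting $D,E,E'$, no dual pair is disjoint from $D\cup E\cup E'$, and no choice inside the destabilized splitting can produce one.

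\paragraph{Why the proposed remedies do not close the gap.}
Invoking Lemma~\ref{lem2.1} does not help. That lemma produces a dual disk avoiding $E^*$ only after you already have a primitive disk on the $W$-side disjoint from $E^*$, and that input is exactly what can fail. In the example, no non-separating disk of $\hat W$ misses $\partial F$: attaching a $2$-handle to $\hat W$ along $\partial F$ yields the trefoil exterior, whose boundary is incompressible. The hypothesis $g\ge 3$ does not rescue the argument either, since the example has $g=3$.

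In the disjoint case, $(D,E)$ and $(D,E')$ must in general be joined by a chain with several intermediate dual pairs (for instance routed through auxiliary dual disks of $D$). Producing such a chain is the real content of the lemma. The paper only quotes the lemma from \cite{CKL24} without proof, and your proposal does not supply the missing argument.
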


The following is a consequence of Lemmas \ref{lem2.1} and \ref{lem2.2}.

\begin{lemma}\label{lem2.3}
Let $D$ be a primitive disk in $V$.
Let $P$ and $Q$ be reducing spheres for $\Sigma$ that are disjoint from $D$.
Then $P \sim Q$.
\end{lemma}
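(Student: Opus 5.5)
The plan is to connect $P$ and $Q$ through reducing spheres associated with dual pairs that contain $D$. The key tool is Lemma~\ref{lem2.1}: it lets us choose, for each sphere, a dual disk of $D$ that avoids that sphere. Lemma~\ref{lem2.2} then joins the resulting dual pairs, since they share the member $D$.

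First, fix $P$. Since $P$ is disjoint from $D$, the disk $D$ lies in one of the two components of $S^3$ cut off by $P$. The disk $P \cap W$ is an essential disk in $W$, because $P \cap \Sigma$ is an essential circle, and it is disjoint from $D$. Lemma~\ref{lem2.1} applied with $\Delta = P \cap W$ gives a dual disk $E_P$ of $D$ disjoint from $P \cap W$. Both $D$ and $E_P$ are then disjoint from $P$, so $\N(D \cup E_P)$ can be taken disjoint from $P$. Let $P'$ be the reducing sphere $\partial \N(D \cup E_P)$, which is disjoint from $P$. If $P'$ and $P$ are not isotopic, they span an edge of $\mathcal{R}(\Sigma)$; otherwise they represent the same vertex. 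In either case $P \sim P'$. Doing the same for $Q$ gives a dual disk $E_Q$ of $D$ and a reducing sphere $Q' = \partial\N(D\cup E_Q)$ with $Q \sim Q'$.

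Next, Lemma~\ref{lem2.2} applied to the common dual disk $D$ of $E_P$ and $E_Q$ gives a sequence of dual pairs $(D, E_P) = (D_1,E_1), \ldots, (D_n,E_n) = (D, E_Q)$ with $D_i\cup E_i$ disjoint from $D_{i+1}\cup E_{i+1}$. Because these unions are disjoint, their regular neighborhoods can be chosen disjoint, so consecutive associated reducing spheres $\partial\N(D_i\cup E_i)$ and $\partial \N(D_{i+1}\cup E_{i+1})$ are disjoint. They are therefore equal or adjacent as vertices. Chaining these gives $P' \sim Q'$, and hence $P \sim Q$.

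The only delicate point is the applicability of Lemma~\ref{lem2.1}. We need $P\cap W$ to be an essential disk in $W$ disjoint from $D$, which holds because $D\subset V$ and $P\cap D=\emptyset$. Everything else is bookkeeping about disjoint neighborhoods; note that the connectivity notion $\sim$ already tolerates coincident vertices. I expect no real obstacle beyond verifying these disjointness conditions.
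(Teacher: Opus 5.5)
Your proposal is correct and follows the paper's proof: you apply Lemma~\ref{lem2.1} to $P\cap W$ and $Q\cap W$ to get dual disks of $D$ avoiding each sphere, then chain the associated reducing spheres through the sequence from Lemma~\ref{lem2.2}. Your explicit checks are also right: $P\cap W$ is essential and disjoint from $D$, and a disjoint pair of spheres gives either an edge or the same vertex.
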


\begin{proof}
By Lemma \ref{lem2.1}, there exists a dual disk $E$ of $D$ such that $E$ is disjoint from $P \cap W$, and there exists a dual disk $E'$ of $D$ such that $E'$ is disjoint from $Q \cap W$.
Then $D$ is a common dual disk of $E$ and $E'$.
By Lemma \ref{lem2.2}, there exists a sequence of dual pairs $(D, E) = (D_1, E_1), (D_2, E_2), \ldots, (D_n, E_n) = (D, E')$ such that $D_i \cup E_i$ and $D_{i+1} \cup E_{i+1}$ are disjoint for each $i \in \{1, 2, \ldots, n-1\}$.
Thus we can choose a reducing sphere $P_i$ for $\Sigma$ associated with $(D_i, E_i)$ for each $i \in \{1, 2, \ldots, n\}$ so that $P_i$ is disjoint from $P_{i+1}$ for each $i \in \{1, 2, \ldots, n-1\}$. 
Furthermore, the reducing spheres $P_1$ and $P_n$ can be chosen to be disjoint from (or isotopic to) $P$ and $Q$, respectively, and hence we have $P \sim Q$.
\end{proof}

We will use the following lemma frequently in Sections \ref{sec5} and \ref{sec6}.

\begin{lemma}\label{lem2.4}
Let $\mathcal{C}$ be a finite collection of mutually disjoint compressing disks for $\Sigma$.
Suppose that the surface obtained by compressing $\Sigma$ along the union of disks of $\mathcal{C}$ has a $2$-sphere component $S$.
Let $D_1, \ldots, D_n$ be the disks of $\mathcal{C}$ such that $D_i$ is contained in $V$ and at least one scar of $D_i$ lies in $S$ for each $i \in \{1, 2, \ldots, n\}$.
Let $E_1, \ldots, E_m$ be the disks of $\mathcal{C}$ such that $E_j$ is contained in $W$ and at least one scar of $E_j$ lies in $S$ for each $j \in \{1, 2, \ldots, m\}$.
Let $\gamma$ be a circle in $S$ that separates the scars from $D_1, \ldots, D_n$ and the scars from $E_1, \ldots, E_m$.
Then there exists a reducing sphere $P$ for $\Sigma$ such that $\gamma = P \cap \Sigma$ and $P$ separates $\bigcup^n_{i=1} D_i$ and $\bigcup^m_{j=1} E_j$.
That is, $\bigcup^n_{i=1} D_i$ and $\bigcup^m_{j=1} E_j$ are disjoint from $P$, and they are contained in different components cut off by $P$. 
\end{lemma}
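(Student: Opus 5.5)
The plan is to build $P$ by reversing the compressions: start from a $2$-sphere in the compressed picture and tube back along the compressing disks. Let $\Sigma' = \Sigma/\bigcup\mathcal{C}$.

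\textbf{Step 1: the sphere $S$ bounds a $3$-ball.} Since we are in $S^3$, $S$ bounds $3$-balls on both sides. Each scar on $S$ is a disk coming from some disk of $\mathcal{C}$. Scars of $D_i \subset V$ are copies of $D_i$ pushed off into $V$, and scars of $E_j \subset W$ are copies of $E_j$ pushed into $W$. The circle $\gamma$ bounds two disks $S_V$ and $S_W$ in $S$: $S_V$ contains all $V$-scars and $S_W$ contains all $W$-scars. Pushing $S_V$ slightly into the $V$-side and $S_W$ slightly into the $W$-side, keeping $\gamma$ fixed, gives a sphere $P'$ with $P' \cap \Sigma' = \gamma$.

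\textbf{Step 2: realize $\gamma$ on $\Sigma$.} Choose $\gamma$ away from the scars. Then $\gamma$ lies on $\Sigma' \cap \Sigma$, since compression changes $\Sigma$ only near the disks of $\mathcal{C}$. Recover $\Sigma$ from $\Sigma'$ by reattaching tubes (annuli $\partial \N(\text{disk}) \cap$) along the disks of $\mathcal{C}$. A tube attached at a scar on $S_V$ lies in a thin neighborhood of a disk $D_i$ in $V$. I will take the pushed-off disk $S_V$ to lie on the side of $S$ away from the tube's interior, or equivalently isotope $P'$ so that it misses $\N(D_i)$; the same applies to $S_W$. The key point is that each tube meets $S$ only in the scar circles, so $P'$ can be chosen disjoint from all tubes. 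Other components of $\Sigma'$ and other tubes are disjoint from $P'$ once the push-offs are small. Hence $P \cap \Sigma = \gamma$, and $P := P'$ meets $V$ in a disk parallel to $S_V$ minus nothing, and meets $W$ similarly.

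\textbf{Step 3: essentiality and separation.} The sphere $P$ is disjoint from every disk of $\mathcal{C}$. On $S$, the $D_i$-scars and the $E_j$-scars lie on opposite sides of $\gamma$. Following the tubes, $\bigcup D_i$ lies on one side of $P$ and $\bigcup E_j$ on the other, since each $D_i$ is attached to $S$ through a scar in $S_V$ and never crosses $P$. It remains to show that $\gamma$ is essential in $\Sigma$. Each side of $\gamma$ in $\Sigma$ contains the tube of some $D_i$ (respectively $E_j$), hence contains an essential disk boundary of $V$ (respectively $W$), provided $n, m \ge 1$. So neither side is a disk. In the degenerate case where $n = 0$ or $m = 0$, the statement is vacuous or requires $\gamma$ to be essential; I would note that this is implicitly assumed, or argue that $\gamma$ must cut off a nondisk part.

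\textbf{Main obstacle.} The hardest step is making Step 2 rigorous. The issue is that $S$ may also carry scars from disks whose other scar lies on $S$ itself, or on other components of $\Sigma'$. I must check that pushing $S_V$ into the $V$-side of $S$ does not run into the other components of $\Sigma'$ lying in that ball. I would handle this by taking the push-off within a collar of $S$ small enough to miss everything else.
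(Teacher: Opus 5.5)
Your proposal is correct and follows the paper's approach: the paper also caps $\gamma$, viewed back on $\Sigma$, with a disk in $V$ disjoint from the $D_i$ and a disk in $W$ disjoint from the $E_j$, and these are exactly your push-offs of $S_V$ and $S_W$ into the two sides of $S$. Your proof adds two things the paper only asserts. You justify essentiality of $\gamma$ by observing that each side contains some $\partial D_i$ or $\partial E_j$, and you note that $n,m\ge 1$ is implicitly required.
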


\begin{proof}
If we recover the original Heegaard surface $\Sigma$, then $\gamma$ becomes an essential separating circle in $\Sigma$, that bounds essential separating disks $D$ and $E$ in $V$ and $W$, respectively.
Furthermore, we can choose such disks $D$ and $E$ so that they are disjoint from $\bigcup^n_{i=1} D_i$ and $\bigcup^m_{j=1} E_j$ respectively.
Then the union of $D$ and $E$ is the desired reducing sphere $P$.
\end{proof}

We end the section with a special property of the genus-$3$ Heegaard splitting of $S^3$.

\begin{lemma}\label{lem2.5}
For a genus-$3$ Heegaard splitting $(V, W; \Sigma)$ of $S^3$, if $D - E$ is a non-separating weak reducing pair for $\Sigma$, then at least one of $D$ and $E$ is a primitive disk.
\end{lemma}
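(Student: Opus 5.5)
We need to prove Lemma 2.5: for the genus-3 splitting of $S^3$, a non-separating weak reducing pair $D - E$ has at least one primitive member.

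The plan is to compress $\Sigma$ along $D \cup E$ and study the resulting genus-1 surface. Since $D$ and $E$ are non-separating with disjoint boundaries, I first check that $\Sigma/(D \cup E)$ is connected. If $\partial D \cup \partial E$ separated $\Sigma$, then $\partial D$ and $\partial E$ would be homologous in $\Sigma$. But $\partial D$ bounds in $V$, $\partial E$ bounds in $W$, and $H_1(\Sigma) \to H_1(V) \oplus H_1(W)$ is an isomorphism for a splitting of $S^3$. Hence both classes would vanish, contradicting that the disks are non-separating. So $T = \Sigma/(D\cup E)$ is a torus carrying two scars $d_\pm$ of $D$ and two scars $e_\pm$ of $E$.

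Next I use Waldhausen's theorem that Heegaard splittings of $S^3$ are standard. Compressing $V$ along $D$ gives a genus-2 handlebody $V_D$. Then $W' = W \cup \N(D)$ is its complement, and $\Sigma/D$ gives a genus-2 splitting $(V_D, W')$ of $S^3$. Here $E$ is a non-separating disk in $W'$ disjoint from the scars of $D$, so $(V_D, W'; \Sigma/D)$ is weakly reducible. By Waldhausen's uniqueness and the genus-2 theory (every non-separating disk in a genus-2 splitting of $S^3$ is primitive, or, in the other direction, every weak reducing pair is separated by a reducing sphere; this is the $g\le 3$ case of \cite{CKL25}), $E$ is primitive in $(V_D,W')$. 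So there is a disk $D'$ in $V_D$ with $|\partial D' \cap \partial E|=1$.

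If $D'$ can be chosen with $\partial D'$ avoiding the scars $d_\pm$, then $D'$ is a disk in $V$ dual to $E$, and we are done. In general $\partial D'$ passes over the scars. So the key step is to move $\partial D'$ off $d_\pm$ while keeping one intersection with $\partial E$. My first attempt is an outermost-arc argument in the spirit of Lemma~\ref{lem2.1}: band $D'$ along copies of $D$ to remove intersections with the cocore arcs of $\N(D)$, keeping track of the point on $\partial E$. If this fails, I run the symmetric argument with the roles of $V$ and $W$ swapped, obtaining a dual $E'$ of $D$ in $W$ that meets the scars $e_\pm$.

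To finish, I analyze the torus $T$ with its four scars. The splitting $\Sigma/(D\cup E)$ is a genus-1 splitting of $S^3$, so $T$ bounds a solid torus on each side. The meridian disk on the $V$ side meets the core of the $W$ side once. I choose the meridian disk $\Delta_V$ in the compressed $V$ side disjoint from the scars of $E$ and, using Lemma~\ref{lem2.1}-type cut-and-paste, reduce its intersection with the arcs joining $e_+$ to $e_-$. The arc of $\Sigma$ running through the $E$-handle, joining $e_\pm$ on $T$, together with a suitable meridian, should produce a disk meeting $\partial E$ exactly once. Concretely, I aim to show that either an arc on $T$ joining $d_+$ to $d_-$ avoids a meridian curve of one side, or an arc joining $e_+$ to $e_-$ does. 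Closing such an arc through the handle gives a curve meeting $\partial D$ (or $\partial E$) once, and it bounds a disk because it is isotopic to the meridian on $T$.

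I expect the main obstacle to be this last planar/torus combinatorics. One must show that the scar pairs cannot both be "linked" with the meridians in a way that obstructs every dual. I would resolve it by counting the algebraic intersection of the arcs with the meridians, using the isomorphism $H_1(\Sigma) \cong H_1(V) \oplus H_1(W)$, which forces one of the two arcs to have algebraic intersection $\pm 1$ or $0$ with the opposite meridian. I would then realize this geometrically on the four-holed torus.
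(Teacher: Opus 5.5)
Your argument breaks where you conclude that $E$ is primitive in the compressed genus-$2$ splitting $(V_D, W'; \Sigma/D)$.

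The first justification you give is false. The genus-$2$ splitting of $S^3$ has non-separating disks that are not primitive. For example, take a nontrivial tunnel-number-one knot $K$ with unknotting tunnel $\tau$, set $W_2=\N(K\cup\tau)$, and let $E$ be the cocore of $\tau$; then $V_2\cup\N(E)$ is the exterior of $K$, which is not a solid torus.

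The second justification, via \cite{CKL25}, needs a non-separating disk in $V_D$ disjoint from $E$, and you never produce one. Weak reducibility of $\Sigma/D$ says nothing about $E$ in particular.

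In fact the claim itself is false. Write the genus-$3$ splitting as $(V_1,W_1)\#(V_2,W_2)$, with $(V_1,W_1)$ of genus $1$ and $(V_2,W_2)$ as above. Let $D$ be the meridian disk of $V_1$ and $E$ the tunnel cocore in $W_2$. Then:
\begin{itemize}
\item $D-E$ is a non-separating weak reducing pair;
\item $V_D\cup\N(E)\cong V_2\cup\N(E)$ is the exterior of $K$, so $E$ has no dual disk in $V_D$;
\item by Lemma~\ref{lem2.1}, $E$ therefore has no dual disk in $V$ either, so only $D$ is primitive.
\end{itemize}
The lemma's conclusion is a disjunction, so a proof that commits to $E$ at this stage cannot work.

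The same example breaks your final step. Here $\Sigma/(D\cup E)$ bounds the knot exterior $V_D\cup\N(E)$, so it is not a genus-$1$ Heegaard surface. An algebraic intersection count cannot fix this, because primitivity is not a homological condition.

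The step you flag as the main obstacle is actually trivial. The scars of $D$ are disks in $\partial V_D$ disjoint from $\partial E$, so $\partial D'$ can be pushed off them without changing $|\partial D'\cap\partial E|=1$. Hence ``$E$ is primitive in $(V_D,W')$'' is simply equivalent to ``$E$ is primitive in $\Sigma$'', and that is exactly what fails in general.

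The missing idea is to apply the $g\le 3$ result of \cite{CKL25} directly to the genus-$3$ pair. It gives a reducing sphere $P$ separating $D$ and $E$, which splits $\Sigma$ into genus-$1$ and genus-$2$ summands. Whichever of $D$, $E$ lies on the genus-$1$ side is the meridian disk of the solid torus $V_1$ or $W_1$ cut off there. The meridian disk of the other solid torus on that side is a dual disk for it. This is the paper's proof.
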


\begin{proof}
It is shown in \cite[Lemma 2.1]{CKL25} that there always exists a reducing sphere $P$ for $\Sigma$ that separates such a pair $D - E$ (this is a special property of the Heegaard splitting of $S^3$ only of genus $2$ or $3$).
The reducing sphere $P$ cuts off solid tori $V_1$ and $W_1$ from $V$ and $W$, respectively.
Then we have only two cases, either $D$ is a meridian disk of $V_1$ or $E$ is a meridian disk of $W_1$.
The former case, $D$ turns out to be a primitive disk with a dual disk that is a meridian disk of the solid torus $W_1$ disjoint from $P$.
In the latter case, by a symmetric argument, we see that $E$ is primitive.
\end{proof}

\section{On a $\partial$-version of Haken's lemma}\label{sec3}

Let $F$ be a closed orientable surface. 
A {\em compression body} $V$ is a $3$-manifold obtained from $F \times [0, 1]$ by attaching $2$-handles to $F \times\{ 1 \}$ and then by capping off any resulting $2$-sphere boundary components with $3$-balls.
The surface $F \times \{ 0 \}$ is denoted by $\partial_{+} V$, and $\partial V - \partial_{+} V$ is denoted by $\partial_{-} V$.
If $\partial_{-} V = \emptyset$, then $V$ is a handlebody.

Let $V$ be a compression body with $\partial_{-} V \ne \emptyset$ and suppose that $\chi(\partial_{-} V) > \chi(\partial_{+} V)$.
A collection $\mathcal{D}$ of pairwise disjoint compressing disks $D_1, \ldots, D_n$ for $\partial_{+} V$ is called a {\em complete disk system} for $V$ if the result of cutting $V$ along $\bigcup^n_{i=1} D_i$ is $\partial_{-} V \times I$.
If $V$ is a handlebody, then a {\em complete disk system} for $V$ is a collection $\mathcal{D}$ of pairwise disjoint compressing disks $D_1, \ldots, D_n$ for $\partial V = \partial_+V$ such that the result of cutting $V$ along $\bigcup^n_{i=1} D_i$ is a $3$-ball.

As a special case, when $\chi(\partial_{-} V) = \chi(\partial_{+} V) + 2$, it is not difficult to see that $V$ has a unique complete disk system consisting only of a single disk, say $\mathcal{D} = \{ D_1 \}$, up to isotopy.
In this case, $V$ is a compression body obtained by attaching a single $2$-handle to $F \times \{1\}$. 
If $D$ is a core disk of the $2$-handle, then the disk $D_1 = D \cup (\partial D \times [0, 1])$ forms the unique complete disk system $\mathcal D$ for $V$.
See Figure \ref{fig1} for examples.
In Figure \ref{fig1} (a), $D_1$ is the unique non-separating compressing disk for $\partial_{+} V$ up to isotopy, and in Figure \ref{fig1} (b), $D_1$ is the unique compressing disk for $\partial_{+} V$ up to isotopy, that is separating.
Note that, for Figure \ref{fig1} (a), there are infinitely many separating compressing disks for $\partial_{+} V$ up to isotopy, that can be obtained by banding two parallel copies of $D_1$ along arcs in $\partial_{+} V$.
But such a separating disk cannot form a complete disk system for $V$ by definition.

\begin{figure}[!hbt]
\includegraphics[width=14cm,clip]{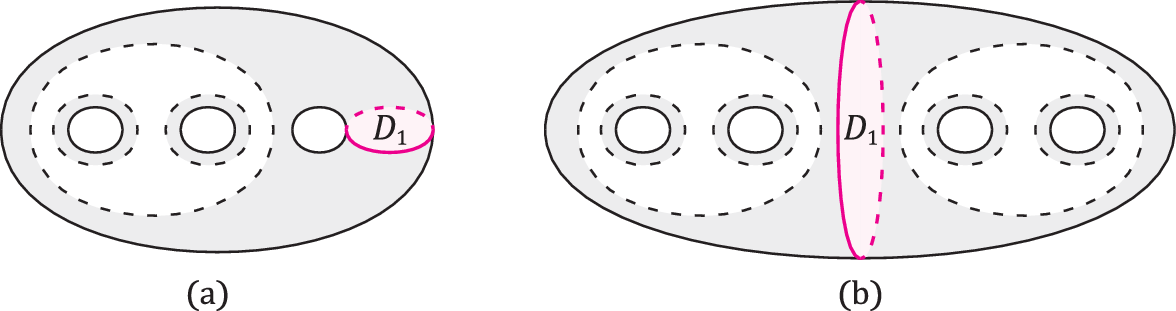}
\caption{Compression bodies with their unique complete disk systems.}
\label{fig1}
\end{figure}

The definition of Heegaard splitting for closed $3$-manifolds can be generalized to compact $3$-manifolds with boundary.
A {\em Heegaard splitting} $(V, W; \Sigma)$ of a compact $3$-manifold $M$ is a decomposition of $M$ into two compression bodies $V$ and $W$ such that $M = V \cup W$ and $V \cap W = \partial_{+} V = \partial_{+} W = \Sigma$ and $\partial M = \partial_{-} V \cup \partial_{-} W$.
A {\em $\partial$-reducing disk} of a $3$-manifold $M$ is a compressing disk for $\partial M$. 
The following is a simplified version (for our purpose) of a lemma due to Casson and Gordon, which is known as a $\partial$-version of Haken's lemma.

\begin{lemma}[A simplified version of Lemma $1.1$ of \cite{Casson-Gordon}]\label{lem3.1}
Let $(V, W; \Sigma)$ be a Heegaard splitting of an irreducible $3$-manifold $M$.
Let $D$ be a $\partial$-reducing disk of $M$ with $\partial D \subset \partial_{-}W$.
Then there exists a $\partial$-reducing disk $D^{\ast}$ of $M$ such that

\begin{enumerate}
\item[(i)] $D^{\ast}$ is isotopic to $D$ in $M$;
\item[(ii)] $D^{\ast}$ intersects $\Sigma$ in a single circle;
\item[(iii)] there exist complete disk systems $\mathcal{D}$ and $\mathcal{E}$ for $V$ and $W$, respectively, such that every disk of $\mathcal{D}$ and $\mathcal{E}$ is disjoint from $D^{\ast}$.
\end{enumerate}
\end{lemma}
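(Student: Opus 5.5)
I would follow the inductive strategy of Casson and Gordon's original argument. The induction is on the complexity $c(\Sigma) = -\chi(\Sigma)$ (equivalently the genus), or more precisely on the number of $1$-handles needed to build $V$ and $W$ from $\partial_- V$ and $\partial_- W$. The basic move is to exhibit a compressing disk for $\Sigma$ that is disjoint from (an isotopic copy of) $D$, compress $\Sigma$ along it to get a Heegaard splitting of lower complexity, apply the inductive hypothesis, and then reassemble.

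First I put $D$ in general position with respect to $\Sigma$ and isotope it so that $|D \cap \Sigma|$ is minimal. Since $\partial D \subset \partial_- W$, the outermost annulus of $D$ near $\partial D$ lies in $W$. By irreducibility of $M$ and minimality, every circle of $D \cap \Sigma$ is essential in $\Sigma$. Now take an innermost disk $D_0$ of $D$ cut off by $D \cap \Sigma$. It lies in $V$ or in $W$ and is a compressing disk for $\Sigma$ disjoint from the rest of $D$ after a small push. If $|D \cap \Sigma| = 1$, condition (ii) already holds. In that case I choose complete disk systems avoiding $D$ directly: $D \cap W$ is a spanning annulus in $W$ between $\partial_- W$ and $\Sigma$, and $D \cap V$ is a disk in $V$ that can be extended to a complete disk system of $V$. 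For $W$, I cut along the vertical annulus and use the product structure on $\partial_- W \times I$ to find cocores of $1$-handles missing the annulus.

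If $|D \cap \Sigma| > 1$, I compress $\Sigma$ along $D_0$, say $D_0 \subset V$. Pushing the resulting surface slightly into $W$ gives a surface $\Sigma'$ that splits $M$ into $V' = V - \N(D_0)$ and $W' = W \cup \N(D_0)$. If $D_0$ is non-separating in $V$, then $V'$ is a compression body, $W'$ is a compression body with one more $1$-handle (the dual), and $c(\Sigma') < c(\Sigma)$. If $\Sigma'$ is disconnected, or $D_0$ separates, I instead treat the pieces separately, using irreducibility to discard any sphere component. Then I apply the induction hypothesis to $(V', W'; \Sigma')$ and the disk $D$, which by construction meets $\Sigma'$ fewer times. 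This gives $D^{\ast}$ isotopic to $D$ meeting $\Sigma'$ in one circle, together with complete disk systems $\mathcal{D}'$ and $\mathcal{E}'$ avoiding it. Finally I recover $\Sigma$ by tubing $\Sigma'$ back along the arc dual to $D_0$, which I isotope off $D^{\ast}$ and off $\mathcal{D}' \cup \mathcal{E}'$. I then set $\mathcal{D} = \mathcal{D}' \cup \{D_0\}$ and let $\mathcal{E}$ be $\mathcal{E}'$ with the tube's cocore accounted for.

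The main obstacle is this last step. The dual arc of the tube must be made disjoint from $D^{\ast}$ and from the disk systems while $D^{\ast}$ still meets $\Sigma$ in a single circle. The arc may be forced to cross $D^{\ast}$, and sliding it over $D^{\ast}$ is only possible because $\partial D^{\ast} \subset \partial_- W$ lies on the boundary. My first attempt would use the $1$-handle slide argument from Haken's lemma: slide the end of the arc around $\partial D^{\ast}$ along $\partial_- W$, or across the vertical annulus $D^{\ast} \cap W$. I would also need to handle separately the degenerate case where the compression produces a sphere component bounding a ball, using irreducibility.
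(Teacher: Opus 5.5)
\paragraph{The paper's proof.} The paper gives no proof of this lemma; it imports it from Casson--Gordon. The Haken-style argument behind that citation never changes the splitting. The surface $\Sigma$ and the complete disk systems stay fixed, and $D$ is simplified by isotopies and compressions guided by outermost arcs. The spheres split off by these compressions bound balls because $M$ is irreducible, which is why $D^\ast$ can be taken isotopic to $D$.

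\paragraph{First gap: the compressed splitting is not a Heegaard splitting.} Your induction changes the splitting, and its first step already fails. After compressing along $D_0\subset V$, the piece $\overline{V-\N(D_0)}$ is a compression body. But $W'=W\cup\N(D_0)$ is $W$ with a $2$-handle attached along $\partial D_0\subset\partial_+W$, and this is not ``one more $1$-handle''. Such a compression body would have $\partial_+$ of larger genus, not smaller. Seen from $\Sigma'$, the dual of $\N(D_0)$ is a $1$-handle attached to $\Sigma'\times I$. So $W'$ is a compression body only if that handle happens to cancel, e.g.\ when $\partial D_0$ is primitive in $W$. For handlebodies, Gordon's theorem (Lemma~\ref{lem5.1}) shows primitivity is necessary. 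Section~\ref{sec6} of this paper relies on exactly this failure: $V_0$ and $W_0$ are not handlebodies. Being an innermost disk of $D$ gives $D_0$ no such property. Hence $(V',W';\Sigma')$ is in general not a Heegaard splitting, and the inductive hypothesis cannot be invoked. In the separating case, a disconnected $\Sigma'$ is not a Heegaard surface of $M$ at all.

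\paragraph{Second gap: the re-tubing step.} Even granting the reduction, the re-tubing step you flag as the main obstacle is the whole content of the lemma, and you leave it open. Suppose the arc $\tau\subset W'$ along which you re-tube meets $D^\ast$. Pushing $D^\ast$ off $\tau$ by finger moves creates new circles of $D^\ast\cap\Sigma$, namely meridians of the tube. These are essential in $\Sigma$ because they bound copies of $D_0$, so (ii) is destroyed. What you would need is that $\tau$ can be cleared off the spanning annulus $D^\ast\cap W'$ by moves that keep $\Sigma$ unchanged up to isotopy, and no argument for this is given. The ends of $\tau$ lie on $\Sigma'$, not on $\partial_-W$, so there is nothing to ``slide around $\partial D^\ast$ along $\partial_-W$''.

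\paragraph{Base case.} Your base case is fine in substance: (iii) does follow from (ii) by the outermost-arc surgery you sketch. One correction: a separating $D\cap V$ can only be avoided by a complete disk system, not extended to one. So everything hinges on achieving (ii), and that is precisely what the proposal does not supply.
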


\section{Topologically minimal surfaces}\label{sec4}

Let $F$ be a separating surface embedded in a $3$-manifold $M$, not necessarily a Heegaard surface.
The (non-separating) weak reducing disks and pairs can be defined for $F$ in $M$ in the same way. 
The surface $F$ is said to be {\em weakly reducible} if there exists a weak reducing pair for $F$.
The surface $F$ is said to be {\em strongly irreducible} if $F$ is compressible to both sides and $F$ is not weakly reducible.
The surface $F$ is called a {\em critical surface} if the set of compressing disks for $F$ can be partitioned into two sets $\mathcal{C}_0 \sqcup \mathcal{C}_1$ satisfying the following properties (see \cite{Bachman1} and \cite{Bachman3}):

\begin{itemize}
\item For each $i \in \{ 0, 1 \}$, there exist disjoint compressing disks $D_i, E_i \in \mathcal{C}_i$ on opposite sides of $F$, i.e. $D_i - E_i$ is a weak reducing pair.
\item For any compressing disks $D_i \in \mathcal{C}_i$ and $E_{1-i} \in \mathcal{C}_{1-i}$ ($i \in \{ 0, 1 \}$) on opposite sides of $F$, the disks $D_i$ and $E_{1-i}$ intersect each other.
\end{itemize}

A sequence of weak reducing disks $\Delta_1, \Delta_2, \ldots, \Delta_n$ for $F$, denoted by $\Delta_1 - \Delta_2 - \cdots - \Delta_n$, is called a {\em weak reducing sequence} for $F$ connecting $\Delta_1$ to $\Delta_n$, if $\Delta_i - \Delta_{i+1}$ is a weak reducing pair for each $i \in \{1, 2, \ldots, n-1\}$.
The following lemma is a generalization of Lemma $8.5$ in \cite{Bachman2}, which gives a sufficient condition for the surface $F$ to be critical.

\begin{lemma}\label{lem4.1}
Suppose there are two weak reducing disks $D$ and $D'$ for $F$ such that there is no weak reducing sequence connecting $D$ to $D'$.
Then $F$ is a critical surface.
\end{lemma}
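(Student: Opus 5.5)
We build the partition of the compressing disks by hand, using the weak reducing sequences themselves. Let $\mathcal{C}_0$ be the set of weak reducing disks $\Delta$ for $F$ that can be joined to $D$ by a weak reducing sequence; the one-term sequence $D$ shows $D \in \mathcal{C}_0$. Let $\mathcal{C}_1$ be the set of all other compressing disks for $F$. By hypothesis $D' \in \mathcal{C}_1$, so both sets are nonempty. The plan is to check the two defining properties of a critical surface for this partition $\mathcal{C}_0 \sqcup \mathcal{C}_1$.

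\emph{First property.} For $\mathcal{C}_0$: since $D$ is a weak reducing disk, there is a weak reducing pair $D - E_0$. The sequence $D - E_0$ is a weak reducing sequence, so $E_0 \in \mathcal{C}_0$, and $D, E_0$ lie on opposite sides of $F$ and are disjoint.

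For $\mathcal{C}_1$: since $D'$ is a weak reducing disk, there is a weak reducing pair $D' - E_1$. If $E_1$ were in $\mathcal{C}_0$, a sequence $D - \cdots - E_1$ followed by $D'$ would connect $D$ to $D'$, which is impossible. Hence $E_1 \in \mathcal{C}_1$, and $D', E_1$ are disjoint disks on opposite sides of $F$.

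\emph{Second property.} Suppose, to the contrary, that $\Delta_0 \in \mathcal{C}_0$ and $\Delta_1 \in \mathcal{C}_1$ lie on opposite sides of $F$ and are disjoint. Then $\Delta_0 - \Delta_1$ is a weak reducing pair, so $\Delta_1$ is a weak reducing disk. Appending $\Delta_1$ to a weak reducing sequence from $D$ to $\Delta_0$ gives a sequence from $D$ to $\Delta_1$, so $\Delta_1 \in \mathcal{C}_0$. This contradicts $\Delta_1 \in \mathcal{C}_1$. Hence any two compressing disks from different classes on opposite sides of $F$ must intersect, and $F$ is critical.

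The step needing the most care is the bookkeeping in the definition of $\mathcal{C}_0$ and $\mathcal{C}_1$. All compressing disks must be accounted for, including those that are not weak reducing disks, and placing these in $\mathcal{C}_1$ causes no problem. Disks are taken up to isotopy, and each appending step only uses the disjointness of the two disks in a single pair.
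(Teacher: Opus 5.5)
Your proof is correct and takes essentially the intended route. The paper gives no proof of its own and defers to Lemma~8.5 of \cite{Bachman2}, whose argument is this same partition of the compressing disks into those reachable from $D$ by a weak reducing sequence and all the rest, with both criticality conditions checked by appending one disk to a sequence (and if one-term sequences are not allowed, $D - E_0 - D$ still places $D$ in $\mathcal{C}_0$).
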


We remark that Lemma $8.5$ in \cite{Bachman2} is stated for a Heegaard surface but the same proof identically works for any separating surface embedded in a $3$-manifold.

\smallskip

The {\em disk complex} $\mathcal{D}(F)$ of the surface $F$ is a simplicial complex defined as follows.
The vertices of $\mathcal{D}(F)$ are the isotopy classes of compressing disks for $F$, and a collection of $k+1$ distinct vertices spans a $k$-simplex if there are representative disks of the vertices that are pairwise disjoint.

The surface $F$ is said to be {\em topologically minimal} if $\mathcal{D}(F) = \emptyset$ or $\mathcal{D}(F)$ is not contractible, that is, $\pi_i(\mathcal{D}(F)) \ne 1$ for some $i$.
The {\em topological index} of a topologically minimal surface $F$, simply the {\em index} of $F$, is defined to be $0$ if $\mathcal{D}(F) = \emptyset$, and otherwise, to be the smallest $i$ such that $\pi_{i-1}(\mathcal{D}(F)) \ne 1$. 
The following lemma follows from \cite[Theorems 2.3 and 2.5]{Bachman3}. 

\begin{lemma}
\label{lem: strong irreducibility and criticality}
Let $F$ be a separating surface embedded in a $3$-manifold $M$. 
\begin{enumerate}
\item $F$ is strongly irreducible if and only if it is an index $1$ topologically minimal surface. 
\item $F$ is critical if and only if it is an index $2$ topologically minimal surface.
\end{enumerate}
\end{lemma}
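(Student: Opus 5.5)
The plan is to deduce both equivalences from Bachman's characterization of low-index topologically minimal surfaces \cite[Theorems 2.3 and 2.5]{Bachman3}. The argument should only be a translation between the combinatorial definitions of Section~\ref{sec4} and homotopy properties of $\mathcal{D}(F)$. Throughout, I use one elementary fact: a disk on one side of $F$ and a disk on the other side span an edge of $\mathcal{D}(F)$ precisely when they form a weak reducing pair.

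For (1), first suppose $F$ is strongly irreducible. Then $\mathcal{D}(F)\neq\emptyset$, and there are vertices on both sides. By the elementary fact, no edge joins a disk on one side to a disk on the other. Hence the two sets of vertices lie in different unions of components, $\mathcal{D}(F)$ is disconnected, $\pi_0(\mathcal{D}(F))\neq 1$, and the index is $1$.

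Conversely, suppose $\mathcal{D}(F)$ is nonempty and disconnected. There are two cases.
\begin{itemize}
\item If $F$ is compressible only to one side, then $\mathcal{D}(F)$ is the disk complex of that side. This is contractible by McCullough's theorem, which is the input Bachman uses. That contradicts disconnectedness.
\item So $F$ is compressible to both sides. Suppose, for contradiction, that there is a weak reducing pair $D-E$. I would show that every vertex is joined by a path to $D$ or to $E$. Given a disk $\Delta$ on the same side as $D$, I would run a disk-surgery argument along outermost arcs of $\Delta\cap E$, as in the proof of Lemma~\ref{lem2.1}. This produces disks on that side that are progressively disjoint from $E$, and so yields a path from $\Delta$ to $E$. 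The same argument handles disks on the other side. Thus $\mathcal{D}(F)$ would be connected, a contradiction. Hence $F$ is strongly irreducible.
\end{itemize}

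For (2), I would rely on Bachman's theorem that $F$ is critical if and only if $\mathcal{D}(F)$ is connected and not simply connected. Consider a partition $\mathcal{C}_0\sqcup\mathcal{C}_1$ as in the definition of a critical surface. The two weak reducing pairs $D_i-E_i$ give edges inside each $\mathcal{C}_i$. Any edge between $\mathcal{C}_0$ and $\mathcal{C}_1$ joins disks on the same side of $F$. Using these two facts, one builds an essential loop passing through both $\mathcal{C}_0$ and $\mathcal{C}_1$. Conversely, a nontrivial element of $\pi_1(\mathcal{D}(F))$ gives the partition, via the subcomplex spanned by weak reducing pairs.

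The main obstacle is this second direction of (2), together with the connectivity step in (1). Both genuinely need Bachman's analysis, and I would cite \cite{Bachman3} for them rather than reprove them. The remaining verification is that Bachman's hypotheses, which are stated for surfaces in irreducible $3$-manifolds, apply to an arbitrary separating surface $F$ as used here. I would check this in the same way as the remark after Lemma~\ref{lem4.1}, by noting that the arguments never use that $F$ is a Heegaard surface.
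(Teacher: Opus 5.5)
Your plan matches the paper's: the paper proves this lemma only by citing \cite[Theorems 2.3 and 2.5]{Bachman3}. However, the one argument you actually write out, the connectivity step in the converse of (1), does not work as stated.

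A disk $\Delta$ on the same side as $D$ and the disk $E$ lie on opposite sides of $F$. So $\Delta \cap E = \partial\Delta \cap \partial E$ is a finite set of points of $F$. There are no arcs of $\Delta \cap E$, no outermost subdisks, and nothing to surger along. In Lemma~\ref{lem2.1} the surgery works because both disks lie in the same handlebody $W$.

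The surgery has to be done on one side. First remove circles of $\Delta \cap D$ by innermost-disk surgery. Then repeatedly surger $D$ along an outermost subdisk of $\Delta$ cut off by an arc of $\Delta \cap D$. One of the two resulting disks is a compressing disk that is disjoint from the previous one and meets $\Delta$ in fewer arcs. This gives a path from $D$ to $\Delta$ through disks on that side. The edge $D - E$ then joins the two sides, and the other side is handled the same way using $E$.

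The same surgery shows that each one-sided complex is connected. That is all your one-sided case needs, so contractibility is not required there. In particular, this step is elementary and does not need Bachman or McCullough, contrary to your closing remark.

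For (2), the phrase ``one builds an essential loop'' leaves out the only real content, namely why the loop is essential. Since you cite Bachman, this is not a gap in the plan, but there is a short argument.
\begin{itemize}
\item Label each vertex of $\mathcal{D}(F)$ by its side ($V$ or $W$) and its class $\mathcal{C}_i$.
\item The two defining properties of a critical surface say that adjacent vertices receive equal or cyclically adjacent labels among $(V,0),(W,0),(W,1),(V,1)$. So the labelling is a simplicial map to a $4$-cycle.
\item Consider the loop that runs from $D_0$ to $D_1$ through disks on one side, across $D_1 - E_1$, from $E_1$ to $E_0$ through disks on the other side, and back across $E_0 - D_0$. Its image has degree $\pm 1$, so the loop is essential.
\item A critical surface is weakly reducible, so the corrected argument above gives connectivity of $\mathcal{D}(F)$. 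Hence the index is $2$.
\end{itemize}
The converse direction, that index $2$ implies critical, is where genuine input about the one-sided disk complexes is needed. Citing Bachman there is appropriate.
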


\smallskip

For the genus-$g$ Heegaard surface $\Sigma$ in $S^3$ with $g \geq 2$, it is shown in \cite[Appendix A]{CKL24} that $\pi_1(\mathcal{D}(\Sigma)) = 1$, and hence $\Sigma$ is not a critical surface.
Thus Lemma \ref{lem4.1} tells us that, for any two weak reducing disks for $\Sigma$, there always exists a weak reducing sequence for $\Sigma$ connecting them.

\smallskip

The following is a key property of a topologically minimal surface, shown in \cite{Bachman3}.

\begin{lemma}\cite[Theorem 4.3]{Bachman3}\label{lem4.3}
Let $F$ be a closed surface embedded in a $3$-manifold $M$, and let $F$ separate $M$ into $V$ and $W$.
Let $F_V$ be a surface obtained from $F$ by a maximal sequence of compressions into $V$.
If $F$ is a topologically minimal surface in $M$, then $F_V$ is incompressible in $M$.
\end{lemma}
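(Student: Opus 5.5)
We argue by contradiction. Suppose $F$ is topologically minimal in $M$ but $F_V$ is compressible in $M$; we will show that $\mathcal{D}(F)$ is contractible. Write $F_V$ as the result of compressing $F$ along a maximal collection of pairwise disjoint disks $D_1, \ldots, D_k$ in $V$. Let $H \subset V$ be the compression body cobounded by $F = \partial_+ H$ and $F_V = \partial_- H$ (capping off sphere components), and let $V' = \overline{V - H}$.

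\emph{Step 1.} Maximality of the compressions means that $F_V$ is incompressible in $V'$. We need a reduction: a compressing disk $C$ for $F_V$ in $M$ can be taken with interior in $M' = H \cup_F W$. To get this, make $C$ transverse to $F_V$ and remove inner circles by the usual innermost-disk argument.
\begin{itemize}
\item A circle that is inessential in $F_V$ is removed by a swap, using that $V'$ (or $M'$) is irreducible after discarding ball components.
\item An innermost essential circle would give a compressing disk for $F_V$ inside $V'$, which is impossible.
\end{itemize}
So we may assume $C \cap F_V = \partial C$ and $C \subset M'$. Then $C$ is a $\partial$-reducing disk of $M'$ with $\partial C \subset \partial_- H$. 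Moreover, $(H, W; F)$ is a Heegaard-type splitting of $M'$. If $M'$ is reducible we first cap off spheres, or we treat the reducible case separately by exhibiting a reducing sphere that cones off $\mathcal{D}(F)$.

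\emph{Step 2.} Apply Lemma~\ref{lem3.1} with the roles of $V$ and $W$ chosen so that $\partial C$ lies in $\partial_- H$. This gives a disk $C^\ast$ isotopic to $C$ that meets $F$ in a single circle $c$, together with complete disk systems $\mathcal{D}$ for $H$ and $\mathcal{E}$ for $W$ disjoint from $C^\ast$. Essentiality of $C$ in $F_V$ forces $c$ to be essential in $F$. Hence $E^\ast = C^\ast \cap W$ is a compressing disk for $F$ in $W$. Since $\mathcal{D}$ is complete for $H$ and disjoint from $E^\ast$, every disk of $\mathcal{D}$ forms a weak reducing pair with $E^\ast$. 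In particular, $E^\ast$ lies in a simplex with a complete disk system of the $V$-side compressions.

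\emph{Step 3, the main obstacle.} From this configuration we must deduce that $\mathcal{D}(F)$ is contractible. The plan is to deformation retract $\mathcal{D}(F)$ onto the star of a fixed vertex, as in Bachman's arguments.
\begin{itemize}
\item For any compressing disk $\Delta$ for $F$, surger $\Delta$ along outermost arcs of $\Delta \cap (C^\ast \cup \bigcup \mathcal{D})$.
\item Each surgery produces a disk meeting $C^\ast \cup \bigcup\mathcal{D}$ in fewer arcs and disjoint from the previous disk. This defines simplicial paths that vary continuously over simplices.
\item The surgery terminates in disks disjoint from $E^\ast$, or from the $\mathcal{D}$-disks, all of which lie in the star of $E^\ast$.
\end{itemize}
This yields a homotopy of any sphere in $\mathcal{D}(F)$ into a cone, so every $\pi_i(\mathcal{D}(F))$ is trivial.

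The delicate point is making the surgery coherent on whole simplices, not only on vertices. We handle it by choosing a common minimal position for the finitely many disks in a simplex and surgering them simultaneously along an outermost arc chosen in $C^\ast$. This is the same scheme as in the proof that $\pi_1(\mathcal{D}(\Sigma)) = 1$ in \cite[Appendix A]{CKL24}. Contractibility of $\mathcal{D}(F)$ then contradicts topological minimality, so $F_V$ is incompressible.
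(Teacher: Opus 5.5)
The paper gives no proof of this statement; it quotes it from \cite[Theorem 4.3]{Bachman3}. Judged on its own, your outline has two genuine gaps.

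\textbf{Step 2 misuses Lemma~\ref{lem3.1}.} That lemma concerns a Heegaard splitting of an \emph{irreducible} manifold, so both sides must be compression bodies. Here $F$ is an arbitrary separating surface and $W$ is just some $3$-manifold with $F\subset\partial W$. So $(H,W;F)$ is not a Heegaard splitting of $M'=H\cup_F W$, and ``a complete disk system $\mathcal{E}$ for $W$'' has no meaning. No irreducibility of $M$ or $M'$ is assumed either, and the remark about a reducing sphere that ``cones off $\mathcal{D}(F)$'' is not an argument.

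In fact Steps 1--2 never use topological minimality or any property of $W$, yet they output a compressing disk $E^\ast$ for $F$ in $W$. That conclusion is false in general. For example, let $M'$ be a solid torus and $H$ a collar of $\partial M'$ plus a $1$-handle. Then $W$ is the exterior of an arc in a solid torus; choose the arc knotted enough that $F=\partial W$ is incompressible in $W$. Now glue on $V'=\partial M'\times[-1,0]$. Then $F_V=\partial M'$ compresses in $M$, but no $E^\ast$ exists. Some use of the hypothesis, and an argument other than Casson--Gordon, is needed here. For instance, you could compress maximally into $W$ as well, but then you must push $C$ off $F_W$ and handle a second case.

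\textbf{Step 3, where all the content lies, is not coherent on mixed simplices.} McCullough-type surgery works when all disks of the simplex lie on the same side as the surgering disk. In that case, if $\delta$ is outermost with $\partial\delta=\alpha\cup\beta$, any disk whose boundary crosses $\beta$ would leave an arc inside $\delta$. Your simplices contain both $V$-disks and $W$-disks, and $c$ splits $C^\ast$ into $A\subset H$ and $E^\ast\subset W$.

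Suppose you surger a $V$-disk $D'$ along an outermost $\delta\subset A$, so $\beta\subset c$. A $W$-disk $E'$ of the same simplex can cross $c$ at a point of $\beta$ while its intersection arcs lie in $E^\ast$, not in $\delta$. The surgered disk then meets $\partial E'$, and the edge $D'$--$E'$ is lost. The same happens symmetrically when you surger a $W$-disk along $E^\ast$. ``Outermost in $C^\ast$'' only controls arcs on one side of $c$.

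Concretely, suppose $\partial D'$ and $\partial E'$ are disjoint, each crosses $c$ twice, and the four points interleave along $c$. Then every outermost region, for either disk, has exactly one point of the other boundary in its $\beta$. Each surgered curve therefore meets the other boundary exactly once and cannot be isotoped off it, so no move stays in the complex. Nothing in your setup excludes this. Separately, a $V$-disk disjoint from the disks of $\mathcal{D}$ need not lie in the star of $E^\ast$, since it may still cross $c$.

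As a warning sign, run the same scheme with a reducing sphere $P=(P\cap V)\cup(P\cap W)$ in place of $C^\ast$. It would show that the disk complex of every reducible Heegaard surface is contractible. That is far more than the simple connectivity the paper quotes from \cite[Appendix A]{CKL24}, so that appendix is no evidence the scheme kills all $\pi_i$. Controlling the interaction between $V$-side and $W$-side disks requires a genuinely different idea.
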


Finally, we end the section with a lemma, a direct consequence of Lemma \ref{lem4.3}, which will be used in Section \ref{sec6}.

\begin{lemma}\label{lem4.4}
Any closed topologically minimal surface in $S^3$ is a Heegaard surface.
\end{lemma}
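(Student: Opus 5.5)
We would prove Lemma~\ref{lem4.4} by combining Lemma~\ref{lem4.3} with the fact that $S^3$ contains no incompressible closed surfaces other than spheres. Let $F$ be a closed topologically minimal surface in $S^3$. By Alexander duality (or simply because $H_1(S^3)=0$ and $H_2(S^3)=0$), $F$ is separating, so it splits $S^3$ into two compact pieces $V$ and $W$. The goal is to show that each of $V$ and $W$ is a handlebody, so that $F$ is a Heegaard surface.

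First, let $F_V$ be the surface obtained from $F$ by a maximal sequence of compressions into $V$. By Lemma~\ref{lem4.3}, $F_V$ is incompressible in $S^3$. Every closed surface of positive genus in $S^3$ is compressible (by the loop theorem, since $\pi_1(S^3)=1$ the inclusion of $\pi_1$ of any component is never injective for positive genus). Hence every component of $F_V$ is a $2$-sphere. (If $\mathcal{D}(F)=\emptyset$, i.e.\ index $0$, then $F$ itself is incompressible, so $F$ is a sphere; this is the genus-$0$ Heegaard surface and is consistent with the argument below, with no compressions performed.)

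Next, we recover the structure of $V$. The compressions into $V$ exhibit the part of $V$ between $F$ and $F_V$ as a compression body $C$ with $\partial_+C=F$ and $\partial_-C=F_V$, a union of spheres (built as $F\times[0,1]$ with $2$-handles attached along the compressing disks). The complement $V\setminus C$ is bounded by the sphere components of $F_V$; since $S^3$ is irreducible, each such sphere bounds a ball, and because these spheres lie in $V$ and are disjoint, the pieces of $V\setminus C$ are balls (one needs to check that each sphere bounds its ball on the side away from $C$; the other side contains $F$ of positive genus unless $F$ is a sphere, and an innermost-sphere argument handles nesting). Capping the sphere boundary components of $C$ with these balls shows $V$ is a compression body with empty $\partial_-$, i.e.\ a handlebody. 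The same argument with the roles of $V$ and $W$ exchanged (Lemma~\ref{lem4.3} applies symmetrically) shows $W$ is a handlebody.

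The main obstacle is the bookkeeping in the capping step: making sure that the region of $V$ cut off by $F_V$ consists exactly of $3$-balls, i.e.\ that no sphere of $F_V$ bounds, on its side inside $V$, something other than a ball, and that the handle structure produced by a maximal compression sequence genuinely yields a compression body. I would handle this using irreducibility of $S^3$ together with the observation that each component of $V$ cut along the compressing disks has boundary a union of spheres from $F_V$ and is contained in $V$, so by Alexander's theorem each such component is a ball.
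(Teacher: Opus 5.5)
Your proposal is correct and is essentially the paper's argument: Lemma~\ref{lem4.3} forces $F_V$ and $F_W$ to consist of $2$-spheres, and capping these off with $3$-balls exhibits $V$ and $W$ as handlebodies. In the capping step you can drop the worry about which side of a sphere the ball lies on. By Alexander's theorem both sides of any $2$-sphere in $S^3$ are balls, and the genus of $F$ is irrelevant there, since a ball can contain surfaces of any genus. The point that actually needs saying is that each component of $\overline{V - C}$ meets only one sphere of $F_V$, which follows immediately from the connectedness of the compression body $C$.
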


\begin{proof}
Let $F$ be a closed topologically minimal surface in $S^3$, and let $F$ separate $S^3$ into $V$ and $W$.
Let $F_V$ and $F_W$ be surfaces obtained from $F$ by a maximal sequence of compressions into $V$ and $W$ respectively.
Since $F_V$ is incompressible in $S^3$ by Lemma \ref{lem4.3}, $F_V$ consists only of $2$-sphere components.
If we cap off the $2$-spheres with $3$-balls, it turns out that $V$ is a handlebody.
Similarly, $W$ is also a handlebody, and hence $F$ is a Heegaard surface.
\end{proof}

\section{Non-separating weak reducing disks for $\Sigma$}\label{sec5}

In general, a simple closed curve $C$ in the boundary of a handlebody $V$ is called a {\em primitive curve} if there exists an essential disk $D$ in $V$ such that $\partial D$ and $C$ intersect transversely in a single point.
Again, such a disk $D$ is called a {\em dual disk} of $C$.

\begin{lemma}[A simplified version of Theorem $1$ of \cite{Gordon}]\label{lem5.1}
Let $C$ be a simple closed curve in the boundary of a handlebody $V$ such that the result of adding a $2$-handle to $V$ along $C$ is also a handlebody.
Then $C$ is a primitive curve.
\end{lemma}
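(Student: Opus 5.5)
The plan is to reinterpret $C$ as the meridian of an arc in a handlebody and to use Heegaard splitting theory of handlebodies. Let $V[C]$ be the manifold obtained by attaching the $2$-handle $h$ along $C$, so that $V[C]$ is a handlebody $H$ by hypothesis. Comparing Euler characteristics, $H$ has genus $g-1$, where $g$ is the genus of $V$. Let $t$ be the co-core arc of $h$. Then $t$ is a properly embedded arc in $H$, we have $V = \overline{H - \N(t)}$, and $C$ is the meridian circle of $t$ on $\partial V$. The key reformulation is that $C$ is primitive as soon as $t$ is parallel into $\partial H$. Indeed, a disk of parallelism between $t$ and an arc in $\partial H$, restricted to $V$, is an essential disk in $V$ whose boundary meets $C$ in exactly one point.

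\textbf{Step 1: build a Heegaard splitting of $H$.} Let $W = (\partial H \times [0,1]) \cup \N(t)$. This is a compression body with $\partial_- W = \partial H$ and $\partial_+ W = \partial V$. Hence $(V, W; \partial V)$ is a genus-$g$ Heegaard splitting of the irreducible manifold $H$, with $\chi(\partial_- W) = \chi(\partial_+ W) + 2$. By the discussion around Figure~\ref{fig1}(a), $W$ has a unique non-separating compressing disk up to isotopy. This disk is the meridian disk of $\N(t)$ extended by a collar, and its boundary is $C$.

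\textbf{Step 2: show that this splitting is stabilized.} I would argue by induction on the genus of $H$, using Lemma~\ref{lem3.1}. Since $H$ has genus $g-1 \ge 1$, it has a $\partial$-reducing disk $D$ with $\partial D \subset \partial_- W$. Lemma~\ref{lem3.1} gives a disk $D^\ast$ isotopic to $D$ that meets $\Sigma$ in a single circle and is disjoint from complete disk systems of $V$ and $W$. Cutting $H$ along $D^\ast$ splits $(V,W)$ into Heegaard splittings of handlebodies of smaller genus. The arc $t$, equivalently the unique $1$-handle of $W$, survives in exactly one of the pieces, because $D^\ast$ is disjoint from the complete system $\mathcal{E}$ of $W$, which here is the single disk bounded by $C$.

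Iterating, we reach a Heegaard splitting of a $3$-ball whose $W$-side is a collar of $S^2$ with one $1$-handle, namely the co-core neighbourhood of an arc. In the $3$-ball a genus-$1$ splitting is standard by Waldhausen's theorem (applied after capping off to $S^3$), so it is a stabilization. Because the disk bounded by $C$ is the only non-separating compressing disk of $W$, the $W$-disk of any dual pair must be this disk. Hence there is an essential disk in $V$ meeting $C$ once.

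\textbf{Step 3: reassemble.} The pieces are glued back along the disks $D^\ast$. The dual disk found in Step 2 can be kept disjoint from these gluing disks, so it survives as an essential disk of the original $V$ meeting $C$ in a single point. Therefore $C$ is primitive.

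The main obstacle is the bookkeeping in Step 2. At each cut I must check that the disk $D^\ast$ really avoids the relevant $W$-disk. I must also check that the arc $t$ ends up in a piece where the induction still applies, rather than being split into a degenerate piece. If this becomes messy, the fallback is to quote the classification of Heegaard splittings of handlebodies (every such splitting is standard, by Scharlemann–Thompson) and then apply the uniqueness of the non-separating disk in $W$ as above.
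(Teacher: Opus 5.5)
The paper gives no proof of this lemma; it simply imports the one-curve case of Gordon's Theorem~1, which is stated for whole collections of curves. Your proposal supplies an actual proof, and it is correct modulo routine details.

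The reformulation is right: if the cocore arc $t$ is $\partial$-parallel in $H=V[C]$, then $C$ is primitive. The pair $(V,(\partial H\times I)\cup\N(t))$ is a genus-$g$ splitting of the genus-$(g-1)$ handlebody $H$. Its $W$-side has the disk $E_C$ bounded by $C$ as its unique complete disk system. Lemma~\ref{lem3.1} then gives an essential disk $D^\ast$ of $H$ with $D^\ast\cap E_C=\emptyset$, so the first half of the bookkeeping you worry about is automatic.

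For the second half, note that $D^\ast\cap W$ is an incompressible spanning annulus in $W$ cut along $E_C\cong\partial H\times I$. Hence, exactly as in Claim~1 of Section~\ref{sec6}, $D^\ast$ can be isotoped off $t$. Then $t$ lies in one piece $H_1$ of $H$ cut along $D^\ast$. Moreover $\overline{H_1-\N(t)}$ is a component of $\overline{H-\N(t)}\cong V$ cut along a disk, hence a handlebody. The genus of $H_1$ is strictly smaller whether or not $D^\ast$ separates. So it is cleanest to run the induction on the statement ``an arc in a handlebody with handlebody exterior is $\partial$-parallel.''

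At the bottom of the induction, $\partial_-W$ is a $2$-sphere, which the paper's definition of compression body excludes, so treat that case directly. Waldhausen works once you isotope the meridian disk of $W\cup B^3$ off $B^3$. More simply, a solid torus $V''$ with $V''[C]\cong B^3$ forces $C$ to meet a meridian disk once, since otherwise $\pi_1(V''[C])$ is a nontrivial cyclic group. Reassembly only requires pushing the boundary arc of a parallelism disk for $t$ off the two copies of $D^\ast$ in $\partial H_1$, by an isotopy supported near those disks.

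Compared with the paper, the citation is shorter and covers Gordon's multi-curve statement. Your route shows that, for this paper's purposes, Lemma~\ref{lem5.1} follows from Lemma~\ref{lem3.1} (already used in Sections~\ref{sec5} and~\ref{sec6}) together with the genus-one case. It is precisely the special case ``a genus-$g$ splitting of a genus-$(g-1)$ handlebody is stabilized'' of the Scharlemann--Thompson theorem you name as a fallback.
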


The two lemmas in the following will be used in Section \ref{sec7} to construct a weak reducing sequence consisting only of non-separating disks for the genus-$4$ Heegaard surface in $S^3$.

\begin{lemma}\label{lem5.2}
Let $(V, W; \Sigma)$ be the genus-$4$ Heegaard splitting of $S^3$.
Let $D - E - D'$ be a weak reducing sequence for $\Sigma$, where $D, D' \subset V$ are not isotopic.
Suppose that $E \subset W$ is a separating disk.
Then there exists a new weak reducing sequence $D - E' - D'$ or $D - E' - D'' - E'' - D'$ for $\Sigma$ such that $E'$, $D''$, $E''$ are all non-separating.
\end{lemma}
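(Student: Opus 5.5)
The plan is to exploit the structure of $W$ cut along the separating disk $E$. The disk $E$ is essential and separating in the genus-$4$ handlebody $W$, so it cuts $W$ into two handlebodies $W_1$ and $W_2$ of genera $g_1, g_2 \ge 1$ with $g_1 + g_2 = 4$. Correspondingly, $\partial E$ cuts $\Sigma$ into two once-punctured surfaces $\Sigma_1 \subset \partial W_1$ and $\Sigma_2 \subset \partial W_2$. The boundaries $\partial D$ and $\partial D'$ are disjoint from $\partial E$, so each lies in $\Sigma_1$ or in $\Sigma_2$.

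\emph{Case 1: $\partial D$ and $\partial D'$ lie in the same side}, say $\Sigma_1$. Choose a non-separating meridian disk $E'$ of $W_2$ disjoint from $E$. It is an essential non-separating disk in $W$, and $\partial E' \subset \Sigma_2$ is disjoint from $\partial D \cup \partial D'$. Thus $D - E' - D'$ is the required sequence. (If $D$ meets $E'$ in the interior, standard innermost-disk surgery in $V$ and $W$ removes the intersections, since only the boundaries matter for a weak reducing pair.)

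\emph{Case 2: $\partial D \subset \Sigma_1$ and $\partial D' \subset \Sigma_2$.} Take non-separating meridian disks $E' \subset W_2$ and $E'' \subset W_1$, disjoint from $E$. Then $\partial E'$ misses $\partial D$, and $\partial E''$ misses $\partial D'$. It remains to find a non-separating disk $D'' \subset V$ whose boundary misses both $\partial E'$ and $\partial E''$; this gives $D - E' - D'' - E'' - D'$.

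For $D''$ I would compress $\Sigma$ along $E \cup E' \cup E''$, or more simply use the dual pair structure. Since $\Sigma$ is the standard genus-$4$ splitting of $S^3$, a reducing sphere exists by Waldhausen. More concretely, $D$ and $E$ form a weak reducing pair, and compressing $\Sigma$ along $E$ gives two surfaces of genus $g_1$ and $g_2$, bounding $W_1, W_2$ on one side. I would try to choose $E'$ and $E''$ flexibly (any non-separating meridian of $W_2$, resp.\ $W_1$) so that $E' \cup E''$ is non-separating in $W$ with a connected complement in $\Sigma$ of genus $2$. Then $\Sigma/(E'\cup E'')$ is a genus-$2$ Heegaard-type surface in $S^3$ bounding a handlebody on the $V$ side. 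By the Casson--Gordon or Haken type argument, Lemma~\ref{lem3.1}, it is compressible into $V$ by a disk disjoint from $E' \cup E''$. Choosing this compression non-separating yields $D''$.

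The main obstacle is guaranteeing that $D''$ is an essential non-separating disk of $V$ avoiding both $E'$ and $E''$. A compression disk found via Lemma~\ref{lem3.1} might be separating in $V$ or inessential in $\Sigma$. I would handle this by taking a complete disk system $\mathcal{D}$ of the compression body on the $V$ side, disjoint from the reducing disk given by Lemma~\ref{lem3.1}. Such a system contains a non-separating disk of $V$ whose boundary misses $\partial E' \cup \partial E''$. I would then verify essentiality using the fact that $\partial E$ separates $\partial E'$ from $\partial E''$.
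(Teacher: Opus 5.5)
Your Case 1 is correct and is exactly the paper's first step. Your reduction of Case 2 is also logically fine: with $E'\subset W_2$ and $E''\subset W_1$ non-separating, everything hinges on finding a non-separating disk $D''\subset V$ with $\partial D''\cap(\partial E'\cup\partial E'')=\emptyset$. But that existence statement is essentially the whole lemma, and your argument for it has a genuine gap.

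The surface $\Sigma/(E'\cup E'')$ bounds the genus-$2$ handlebody $\overline{W-\N(E'\cup E'')}$ on the $W$-side. On the $V$-side it bounds $V\cup\N(E')\cup\N(E'')$, i.e.\ $V$ with two $2$-handles attached. You give no reason why some choice of $E'$, $E''$ makes this a handlebody; by Gordon's theorem (cf.\ Lemma~\ref{lem5.1}) that would require a primitivity property of $\partial E'$, $\partial E''$ in $V$. You also give no reason why its boundary is even compressible, and the factors $W_1$, $W_2$ may themselves be knotted in $S^3$.

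Lemma~\ref{lem3.1} cannot fill this hole: it does not create a $\partial$-reducing disk, it only repositions one that is already given. Even granting such a disk $D^\ast$, three further problems remain:
\begin{itemize}
\item the complete disk systems in Lemma~\ref{lem3.1}(iii) are only disjoint from $D^\ast$, not from $\partial E'\cup\partial E''$;
\item a compression body with two $2$-handles does not have $\{\overline{E'},\overline{E''}\}$ as its unique complete disk system;
\item pushing $D^\ast$ off the $2$-handles needs an argument like Claim~\ref{claim1} in Section~\ref{sec6}.
\end{itemize}
So your last step, producing a non-separating disk of $V$ that misses $\partial E'\cup\partial E''$ and checking that it is essential, is unsupported.

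The missing idea is that $D''$ must be manufactured from $D$ and $D'$, which your Case 2 never uses. The paper proceeds as follows.

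If, say, $\partial D$ is inessential in $\partial W_1$, it is parallel to $\partial E$ and can be moved to the other side, which reduces to your Case 1.

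If $W_1$ is a solid torus, then $\partial W_1/D$ is a sphere, and Lemma~\ref{lem2.4} gives a reducing sphere separating $D$ and $E$; $E'$, $D''$, $E''$ are read off from it. Here the existence of $D$ is exactly what forces $W_1$ to be an unknotted solid torus.

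If both $W_i$ have genus $2$, compress $\partial W_1$ along $D$ and $\partial W_2$ along $D'$ to obtain tori $T_1$, $T_2$ (the paper's $\Sigma_1,\Sigma_2$).
\begin{itemize}
\item If some $T_i$ bounds a solid torus containing $W_i$, Gordon's theorem (Lemma~\ref{lem5.1}) shows that a certain curve is primitive: either $\partial D$, or the boundary of a meridian disk of the solid-torus piece of $V$ cut along $D$. This yields a reducing sphere and the required sequence.
\item Otherwise a compressing disk for $T_1$ or $T_2$ lies in $M_0=V_0\cup W_0$. Here $V_0$ is a shrunken copy of the genus-$2$ piece of $V$ cut along $D\cup D'$, and $W_0$ is a collar plus the $2$-handle $\N(E)$. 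Lemma~\ref{lem3.1} applies there precisely because $W_0$ has the unique complete disk system $\{\overline{E}\}$. It produces a compressing disk disjoint from $\overline{E}$, and then Lemma~\ref{lem2.4} gives a reducing sphere separating $E$ from $D'$ (or from $D$). The disks $E'$, $D''$, $E''$ are chosen on its two sides.
\end{itemize}
Some such reducing sphere, built from $D$ or $D'$, is what your argument is missing.
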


\begin{proof}
Let $W_1$ and $W_2$ be the two handlebodies obtained by cutting $W$ along $E$.
If both of $\partial D $ and $\partial D'$ lie in $\partial W_1$ or in $\partial W_2$, say in $\partial W_1$, we can take a non-separating disk $E'$ in $W_2$ so that $D - E' - D'$ is a desired weak reducing sequence for $\Sigma$.
So we may assume that $\partial D \subset \partial W_1$ and $\partial D' \subset \partial W_2$.
If $\partial D$ is inessential in $\partial W_1$ or $\partial D'$ is inessential in $\partial W_2$, say $\partial D$ is inessential, then $\partial D$ is parallel to $\partial E$ in $\Sigma$.
In this case, we isotope $\partial D$ into $\partial W_2$.
Then both $\partial D$ and $\partial D'$ lie in $\partial W_2$.
We take a non-separating disk $E'$ in $W_1$ as above so that $D - E' - D'$ is a desired weak reducing sequence for $\Sigma$.
From now on, we assume that $\partial D$ and $\partial D'$ are essential in $\partial W_1$ and $\partial W_2$, respectively.
We have two cases, according to the genus of $W_1$ and $W_2$.

\vspace{0.2cm}

\noindent Case $1$. Either $W_1$ or $W_2$ is a solid torus.

\vspace{0.2cm}

We may assume that $W_1$ is a solid torus.
Since $\partial D$ is essential in $\partial W_1$, by compressing $\partial W_1$ along $D$, we get a $2$-sphere containing two scars of $D$ and one scar of $E$.
By Lemma \ref{lem2.4}, we can take a reducing sphere $P$ for $\Sigma$ separating $D$ and $E$.
Let $V'_1$ and $W'_1$ be solid tori cut off by $P$ from $V$ and $W$, respectively, and let $V'_2$ and $W'_2$ be genus-$3$ handlebodies cut off by $P$ from $V$ and $W$, respectively.
We take disjoint essential non-separating disks $E' \subset W'_2$ and $D'' \subset V'_2$ disjoint from $P$, and a meridian disk $E'' \subset W'_1$ disjoint from $P$.
Then $D - E' - D'' - E'' - D'$ is a desired weak reducing sequence for $\Sigma$.

\vspace{0.2cm}

\noindent Case $2$. Both $W_1$ and $W_2$ are genus-$2$ handlebodies.

\vspace{0.2cm}

Consider the surface $\partial W_1 / D$ first.
If $\partial D$ is separating in $\partial W_1$, then $\partial W_1 / D$ consists of two torus components.
Let $\Sigma_1$ be one of the two components which contains one scar of $D$ and one scar of $E$.
If $\partial D$ is non-separating in $\partial W_1$, then $\partial W_1 / D$ is a single torus, containing two scars from $D$ and one scar of $E$.
Let $\Sigma_1$ be $\partial W_1 / D$ itself in this case.
In both cases, $\Sigma_1$ is a torus in $S^3$, which is compressible.
Thus, at least one of the two manifolds bounded by $\Sigma_1$ is a solid torus.
We simply say that $\Sigma_1$ is {\em compressible in $W$-side} if the manifold bounded by $\Sigma_1$ and containing $W_1$ is a solid torus.

In the same way, we have a torus $\Sigma_2$, which is the component of $\partial W_2 / D'$ containing one scar of $D'$ and one scar of $E$ if $\partial D'$ is separating in $\partial W_2$, and is $\partial W_2 / D'$ itself if $\partial D'$ is non-separating in $\partial W_2$.
Again, we say that $\Sigma_2$ is {\em compressible in $W$-side} if the manifold bounded by $\Sigma_2$ and containing $W_2$ is a solid torus.
We divide Case $2$ into two subcases, as follows.

\vspace{0.2cm}

\noindent Case $2$-$1$. 
At least one of $\Sigma_1$ and $\Sigma_2$ is compressible in $W$-side.

\vspace{0.2cm}

It suffices to consider the case where $\Sigma_1$ is compressible in $W$-side. 
The essential curve $\partial D$ is separating or non-separating in $\partial W_1$.

First, assume that $\partial D$ is separating in $\partial W_1$.
Let $W_0$ be the solid torus bounded by $\Sigma_1$, which contains $W_1$.
Let $\Delta$ be a meridian disk of the solid torus component of $\overline{V - \N(D)}$, taken to be disjoint from $D$.
Adding a $2$-handle $\N(\Delta)$ to the genus-$2$ handlebody $W_1$ results in a manifold that is isotopic to the solid torus $W_0$.
Thus, $\partial \Delta$ is a primitive curve in $\partial W_1$ by Lemma \ref{lem5.1}. 
So there exists a dual disk $\Delta'$ of $\partial\Delta$ in $W_1$, and furthermore, we can take the dual disk $\Delta'$ to be disjoint from $E$.
A reducing sphere $P$ for $\Sigma$ associated with $(\Delta, \Delta')$ separates $\Delta \cup \Delta'$ and $E$.
We take a meridian disk $E''$, in the solid torus component of $\overline{W_1 - \N(P \cap W)}$ containing one scar of $E$, such that $E''$ is disjoint from $P \cap W$ and $E$.
We also take a non-separating essential disk $E' \subset W_2$ disjoint from $E$.
Then $D - E' - \Delta - E'' - D'$ is a desired weak reducing sequence for $\Sigma$.
See Figure \ref{fig2}.

\begin{figure}[!hbt]
\includegraphics[width=13cm,clip]{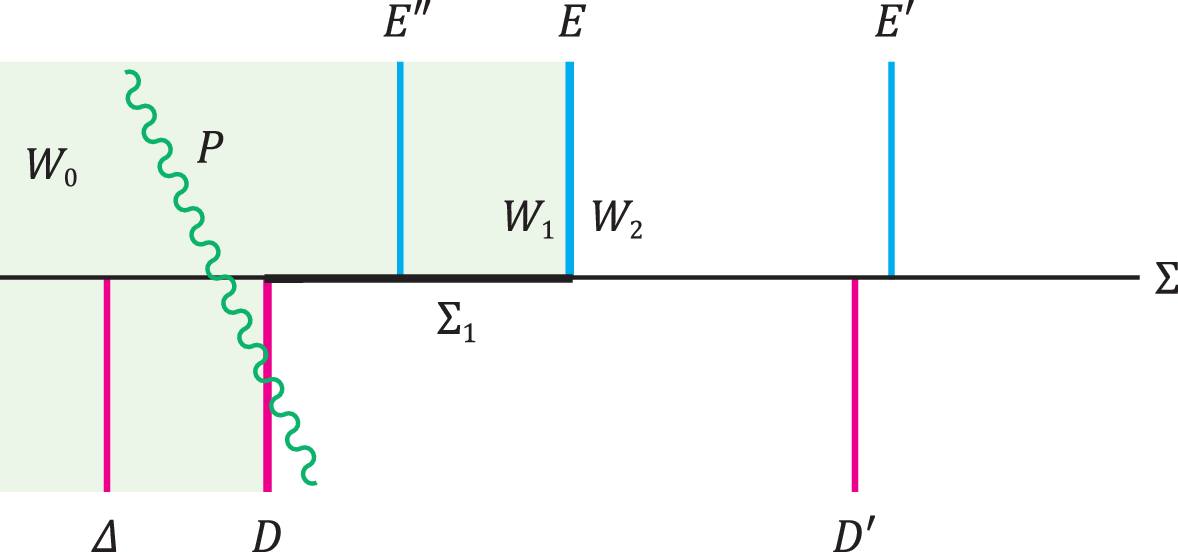}
\caption{A weak reducing sequence $D - E' - \Delta - E'' - D'$ with $E'$, $\Delta$, $E''$ non-separating.}
\label{fig2}
\end{figure}

Next, assume that the curve $\partial D$ is non-separating in $\partial W_1$.
Then the torus $\Sigma_1$ contains two scars from $D$ and one scar of $E$, and $\Sigma_1$ is the boundary of the solid torus $W_1 \cup \N(D)$.
By an argument similar to the above, we see that $\partial D$ is a primitive curve in $\partial W_1$, thus, in $\partial W$.
Thus there exists a reducing sphere $P$ for $\Sigma$ separating $D$ and $E$.
We take a meridian disk $E'$, in the solid torus component of $\overline{W_1 - \N(P \cap W)}$ containing one scar of $E$, such that $E'$ is disjoint from $P \cap W$ and $E$.
Then $D - E' -D'$ is a desired weak reducing sequence.

\vspace{0.2cm}

\noindent Case $2$-$2$. 
None of $\Sigma_1$ and $\Sigma_2$ is compressible in $W$-side.

\vspace{0.2cm}

Let $V'_0$ be the genus-$2$ handlebody component of $\overline{V - \N(D \cup D')}$, which contains $\partial E$.
Let $V_0$ be a slightly shrunken handlebody in the interior of $V'_0$ such that $\overline{V'_0 - V_0}$ is homeomorphic to $\partial V_0 \times [0, 1]$.
Let $W_0$ be the compression body obtained by attaching a $2$-handle $N(E)$ to $\overline{V'_0 - V_0}$.
Then $(V_0, W_0; \partial V_0)$ is a Heegaard splitting of $M_0 = V_0 \cup W_0$.
The compression body $W_0$ has the unique complete disk system $\{ \overline{E} \}$, where $\overline{E} = E \cup (\partial E \times [0, 1])$, i.e. $\overline{E}$ is a disk obtained by extending $E$ slightly.
Let $D_0$ be a compressing disk for the torus $\Sigma_1$, that is, $D_0$ is a meridian disk of the solid torus bounded by the torus $\Sigma_1$.

Suppose first that $D_0 \cap \Sigma_2 \ne \emptyset$, and assume that they intersect minimally.
Let $\Delta$ be an innermost disk in $D_0$ cut off by $\Sigma_2$.
Then $\Delta$ is a compressing disk for $\Sigma_2$.
Since $\Sigma_2$ is not compressible in $W$-side and since $\Delta$ is disjoint from $\Sigma_1$, $\Delta$ must lie in $M_0$.
Note that the compression body $W_0$ has the unique complete disk system $\{ \overline{E} \}$, thus by Lemma \ref{lem3.1}, there exists a compressing disk $\Delta'$ for $\Sigma_2$ in $M_0$ that intersects $\partial V_0$ in a single circle and is disjoint from $\overline{E}$.
Moreover, the disk $\Delta'$ can be chosen to be disjoint from the disk $D'$ by a slight isotopy.
See Figure \ref{fig3}.

Since $\Sigma_2 / \Delta'$ is a $2$-sphere, there exists a reducing sphere $P$ for $\Sigma$ separating $E$ and $\Delta' \cup D'$ by Lemma \ref{lem2.4}.
The sphere $P$ cuts $V$ into two genus-$2$ handlebodies $V'_1$ and $V'_2$, and cuts $W$ into two genus-$2$ handlebodies $W'_1$ and $W'_2$.
We may assume that $V'_1 \cup W'_1$ and $V'_2 \cup W'_2$ lie in the opposite sides of $P$, and that $D \subset V'_1$, $E \subset W'_1$ and $D' \subset V'_2$.
We take a non-separating essential disk $E' \subset W'_2$ disjoint from $P$, and disjoint non-separating essential disks $D'' \subset V'_1$ and $E'' \subset W'_1$ both disjoint from $P$.
Then $D - E' - D'' - E'' - D'$ is a desired weak reducing sequence.

\begin{figure}[!hbt]
\includegraphics[width=12cm,clip]{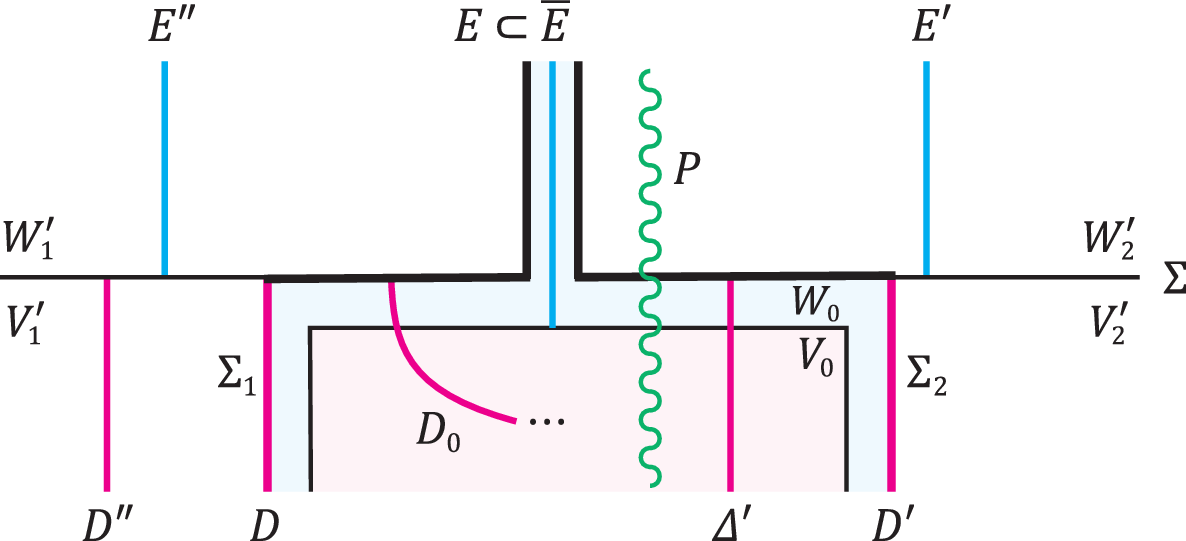}
\caption{A reducing sphere $P$ and a weak reducing sequence $D - E' - D'' - E'' - D'$.}
\label{fig3}
\end{figure}

Suppose next that $D_0 \cap \Sigma_2 = \emptyset$.
Since $\Sigma_1$ is not compressible in $W$-side, the compressing disk $D_0$ for $\Sigma_1$ must lie in $M_0$.
Thus, using $D_0$ instead of $\Delta$, we can find a weak reducing sequence $D - E' - D'' - E'' - D'$ for $\Sigma$, where $E'$, $D''$, $E''$ are all non-separating.
\end{proof}

Similarly, switching the roles of the disks, we have the following.

\begin{lemma}\label{lem5.3}
Let $(V, W; \Sigma)$ be the genus-$4$ Heegaard splitting of $S^3$.
Let $E - D - E'$ be a weak reducing sequence for $\Sigma$, where $E, E' \subset W$ are not isotopic.
Suppose that $D \subset V$ is a separating disk.
Then there exists a new weak reducing sequence $E - D' - E'$ or $E - D' - E'' - D'' - E'$ for $\Sigma$ such that $D'$, $E''$, $D''$ are all non-separating.
\end{lemma}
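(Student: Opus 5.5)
The plan is to obtain Lemma~\ref{lem5.3} from the proof of Lemma~\ref{lem5.2} by interchanging the roles of $V$ and $W$. The genus-$4$ Heegaard splitting of $S^3$ is symmetric: there is an orientation-reversing (or suitable) self-homeomorphism of $S^3$ taking $V$ to $W$ and $\Sigma$ to itself. More simply, every ingredient used in the proof of Lemma~\ref{lem5.2} is symmetric in $V$ and $W$. These ingredients are Lemma~\ref{lem2.4}, Lemma~\ref{lem5.1}, Lemma~\ref{lem3.1} (applied with the compression body obtained by attaching the $2$-handle $\N(D)$ instead of $\N(E)$), and the fact that a torus in $S^3$ bounds a solid torus on at least one side. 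So I would rerun the argument verbatim with $W$ replaced by $V$, $E$ by $D$, and $D,D'$ by $E,E'$.

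Concretely, I would cut $V$ along the separating disk $D$ into $V_1, V_2$. If $\partial E$ and $\partial E'$ lie on the same side, or one of them is inessential there and can be isotoped to the other side, I pick a non-separating disk $D'$ in the other piece, which gives $E - D' - E'$. Otherwise $\partial E \subset \partial V_1$ and $\partial E'\subset\partial V_2$ are essential, and I split into cases.

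\textbf{Case 1: some $V_i$ is a solid torus.} Compressing along $E$ gives a sphere. Lemma~\ref{lem2.4} then gives a reducing sphere separating $E$ from $D$, and the chain $E - D' - E'' - D'' - E'$ is built from non-separating disks on either side of it.

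\textbf{Case 2: both $V_i$ have genus $2$.} I form the tori $\Sigma_1,\Sigma_2$ from $\partial V_1/E$ and $\partial V_2/E'$.

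\emph{Case 2-1: some $\Sigma_i$ is compressible on the $V$-side.} Lemma~\ref{lem5.1} produces a primitive curve, hence a dual pair and a reducing sphere, and the sequence follows as in Figure~\ref{fig2}.

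\emph{Case 2-2: neither $\Sigma_i$ is compressible on the $V$-side.} Take the genus-$2$ handlebody component of $\overline{W-\N(E\cup E')}$ containing $\partial D$. Shrink it to $W_0$, and let $V_0$ be the collar with the $2$-handle $\N(D)$ attached. Then $V_0$ has unique complete disk system $\{\overline{D}\}$. A compressing disk for $\Sigma_2$ (or $\Sigma_1$) is obtained by the innermost-disk argument, and Lemma~\ref{lem3.1} makes it meet $\partial W_0$ in one circle and miss $\overline D$. Lemma~\ref{lem2.4} then gives the reducing sphere and the length-five sequence.

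The only point needing care is that each step really is symmetric. In particular, the solid-torus dichotomy for tori in $S^3$ and the application of Lemma~\ref{lem3.1} must not use any asymmetry between the handlebodies. Neither does, so the proof is a direct transcription of the proof of Lemma~\ref{lem5.2}.
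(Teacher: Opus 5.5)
Your proposal is correct and takes the same approach as the paper: the paper proves Lemma~\ref{lem5.3} in one line by interchanging the roles of $V$ and $W$ (and of the disks) in the proof of Lemma~\ref{lem5.2}, which is your case-by-case transcription. Your side remark about a self-homeomorphism of $S^3$ interchanging $V$ and $W$ is also valid and gives a cleaner formal reduction: apply Lemma~\ref{lem5.2} to the image sequence and pull the result back.
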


\section{Proof of Lemma \ref{lem1.1}}\label{sec6}

Let $(V, W; \Sigma)$ be the genus-$4$ Heegaard splitting of $S^3$, and let $D - E$ be a non-separating weak reducing pair for $\Sigma$ such that $D \subset V$ and $E \subset W$.
Suppose that the surface $\Sigma_0 = \Sigma / (D \cup E)$ is compressible to both sides.

\begin{proof}[Proof of Lemma \ref{lem1.1}]
We want to find a reducing sphere $P$ that separates the pair $D - E$. 

If one of $D$ and $E$, say $D$, is primitive, then by Lemma \ref{lem2.1} we have a dual disk $\Delta \subset W$ of $D$ that is disjoint from $E$.
Then a reducing sphere $P$ for $\Sigma$ associated with $(D, \Delta)$ separates the pair $D - E$ as desired.
In this case, the conclusion follows without using the condition for the surface $\Sigma_0$.
From now on, we assume that both $D$ and $E$ are non-primitive.

\medskip

Each of $\partial D$ and $\partial E$ is non-separating in $\Sigma$.
If $\partial D \cup \partial E$ is separating in $\Sigma$, then each component of $\Sigma_0$ is a closed non-separating surface in $S^3$, a contradiction.
Thus $\partial D \cup \partial E$ is non-separating in $\Sigma$, and hence $\Sigma_0$ is a genus-$2$ surface.
Let $V_0 = \overline{V - \N(D)} \cup \N(E)$ and $W_0 = \overline{W - \N(E)} \cup \N(D)$.
Then $\Sigma_0 = \partial V_0 = \partial W_0$.
Since $E$ and $D$ are non-primitive, each of the manifolds $V_0$ and $W_0$ is not a handlebody by Lemma \ref{lem5.1}, and hence $\Sigma_0$ is not a Heegaard surface in $S^3$.
By Lemma \ref{lem4.4}, the surface $\Sigma_0$ is not topologically minimal.
Thus $\Sigma_0$ is not strongly irreducible by Lemma~\ref{lem: strong irreducibility and criticality}.
Since $\Sigma_0$ is compressible to both sides, it is weakly reducible.
Therefore, there exists a weak reducing pair $D_1 - E_1$ for $\Sigma_0$, where $D_1 \subset V_0$ and $E_1 \subset W_0$.
We may assume that $\partial D_1$ is disjoint from the two scars from $D$ in $\Sigma_0$ and that $\partial E_1$ is disjoint from the two scars from $E$ in $\Sigma_0$.
But, $D_1$ and $E_1$ might intersect the $2$-handles $\N(E)$ and $\N(D)$, respectively.

\medskip

We will show first that $D_1$ and $E_1$ can be isotoped to be disjoint from $\N(E)$ and $\N(D)$, respectively, so that they can be regarded as compressing disks for the original Heegaard surface $\Sigma$ (Claim \ref{claim1}).
The resulting disks might intersect each other in their boundaries, and then we will modify them to obtain compressing disks $D^{\ast}_1$ and $E^{\ast}_1$ for $\Sigma$ that are disjoint from each other (Claim \ref{claim2}).
Finally, we will find a reducing sphere $P$ separating the pair $D - E$ by investigating the curves $\partial D^{\ast}_1$ and $\partial E^{\ast}_1$ in the surface $\Sigma_0$.

\begin{claim}\label{claim1}
We can isotope the disk $D_1$ in $V_0$ to a disk $D^{\ast\ast}_1$ disjoint from $\N(E)$, and the disk $E_1$ in $W_0$ to a disk $E^{\ast\ast}_1$ disjoint from $\N(D)$, respectively.
\end{claim}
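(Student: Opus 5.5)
We treat the two statements symmetrically and describe the argument for $D_1 \subset V_0 = \overline{V - \N(D)} \cup \N(E)$. Here $V_0$ is obtained from the genus-$3$ handlebody $V' = \overline{V - \N(D)}$ by attaching the $2$-handle $\N(E)$ along the annulus $\N(\partial E) \cap \Sigma$. Put $D_1$ in general position with respect to the cocore of this $2$-handle. The cocore is the disk $E$, slightly pushed inside $\N(E)$, and meets $\Sigma_0$ in the two scars of $E$. We may assume that $D_1 \cap \N(E)$ is a collection of parallel copies of this cocore disk, i.e.\ parallel copies of $E$. Since $\partial D_1$ avoids the scars of $E$, every component of $D_1 \cap E$ is a simple closed curve, and no arc occurs. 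The goal is to remove all these circles while keeping the isotopy class of $D_1$ in $V_0$ unchanged.

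The plan is a standard innermost disk argument. Among all disks isotopic to $D_1$ in $V_0$ with boundary disjoint from the scars, choose one minimizing the number of circles in $D_1 \cap E$. Suppose this number is positive, and let $\delta$ be a circle of $D_1 \cap E$ that is innermost in $D_1$, bounding a subdisk $\Delta \subset D_1$ with interior disjoint from $E$. The circle $\delta$ also bounds a disk $\Delta'$ in $E$. The sphere $\Delta \cup \Delta'$ lies in $V_0 \subset S^3$, so by Alexander's theorem it bounds a ball in $S^3$. We must check that this ball lies in $V_0$, i.e.\ that $V_0$ is irreducible. Granting this, we can isotope $\Delta$ across the ball onto $\Delta'$ and then push it slightly past $E$. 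This removes $\delta$, together with any circles inside $\Delta'$, contradicting minimality. Hence $D_1 \cap E = \emptyset$, and the resulting disk $D^{\ast\ast}_1$ lies in $\overline{V_0 - \N(E)} = \overline{V - \N(D)} \subset V$. Its boundary lies in $\Sigma_0$ minus the scars, hence in $\Sigma$ minus the annuli, so $D^{\ast\ast}_1$ is a disk in $V$ with boundary in $\Sigma$.

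The main obstacle is the irreducibility of $V_0$ (and of $W_0$). We argue as follows. $V_0$ is the closure of one complementary region of the closed connected surface $\Sigma_0$ in $S^3$. A $2$-sphere in $V_0$ bounds a ball $B$ in $S^3$. If $B \not\subset V_0$, then $B$ contains $\Sigma_0$, since $\Sigma_0$ is connected and disjoint from $\partial B$. In that case $B$ contains the entire complementary region $W_0$, and the sphere would separate $\Sigma_0$ from part of $V_0$. We need to rule this out, or else note that an isotopy across the other ball, which passes through $W_0$, is still fine for our purposes. One way to avoid the issue is to use the ball on the side that does not contain $\Sigma_0$. Of the two balls bounded by $\Delta \cup \Delta'$ in $S^3$, exactly one avoids the connected surface $\Sigma_0$, and that ball lies in $V_0$. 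This settles irreducibility.

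There is a secondary issue: the isotopy must keep $\partial D_1$ in $\Sigma_0$ away from the scars. This holds because the isotopy is supported near a ball in the interior of $V_0$, with only the final push-off taking place near $E$, away from $\partial D_1$. Finally, $D^{\ast\ast}_1$ still compresses $\Sigma_0$, being isotopic to $D_1$. The argument for $E_1$, $W_0$ and $\N(D)$ is identical.
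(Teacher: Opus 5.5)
Your innermost-disk step does not reach the actual obstruction. After general position, $D_1\cap \N(E)$ consists of parallel copies $E\times\{t_i\}$ of the \emph{core} disk, one for each point of $D_1\cap c$. Here $c=\{p\}\times[0,1]\subset E\times[0,1]=\N(E)$ is the cocore of the $2$-handle, which is an arc joining the two scars, not a disk. Each copy is an entire subdisk of $D_1$, bounded by a circle in the attaching annulus. So for such a circle your $\Delta$ and $\Delta'$ are the same disk, and the ball-swap does nothing. Suppose instead you intersect $D_1$ with a single core disk $E$. Since $\partial E$ lies in the interior of $V_0$, $D_1\cap E$ generally contains arcs, contrary to your claim. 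Moreover, missing one core disk does not imply missing $\N(E)$. What must be eliminated is $D_1\cap c$, the intersection of a disk with a properly embedded arc. An innermost-circle argument plus irreducibility cannot do that.

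In fact, the statement you are proving, with $\partial D_1$ kept off the scars throughout, is false in general. Suppose $\partial D_1$ separates $\Sigma_0$ with the two scars of $E$ on opposite sides. Then $D_1$ separates $V_0$, and the scars stay on opposite sides during any isotopy in which $\partial D_1$ avoids them. Hence $c$ always meets $D_1$. Such disks occur. Take $V'=F\times[0,1]$ with $F$ a four-holed sphere, and attach the $2$-handle along $\beta\times\{1\}$, where $\beta$ cuts $F$ into two pairs of pants. Then $D_1=(\beta\times[0,1])\cup E$ is such a disk. Your argument uses only irreducibility of $V_0$, which holds in this example, so it cannot be correct as written.

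The isotopy must be allowed to slide $\partial D_1$ across the scars, which amounts to a band sum with $E$ when viewed in $V$, and producing it needs a Haken-type input. The paper applies the Casson--Gordon lemma (Lemma~\ref{lem3.1}) to the splitting $(V_1,W_1;\Sigma_1)$ of $V_0$. Here $W_1$ is a collar of $\Sigma_1$ plus the $2$-handle, with unique complete disk system $\{\overline E\}$. The lemma makes $D_1$ meet $\Sigma_1$ in one circle and miss $\overline E$. Then $D_1\cap W_1$ is an annulus that can be taken transverse to the product structure $\Sigma_0\times[0,1]$ of $W_1$ cut along $\overline E$. That annulus is pushed off the vertical arcs running from the centers of the two copies of $\overline E$ to the scars. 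The push keeps $D_1\cap\Sigma_1$ fixed but lets $\partial D_1$ move across the scars. Your check that $V_0$ is irreducible is correct, but it is only a hypothesis of that lemma, not a substitute for it.
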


\begin{proof}[Proof of Claim 1]
Consider the case of the disk $D_1$ in $V_0$ first.
Let $\Sigma_1$ be a parallel copy of $\Sigma / D$ in the interior of $V_0$.
Then $\Sigma_1$ decomposes $V_0$ into two submanifolds $V_1$ and $W_1$, where $V_1$ is a (shrunken) genus-$3$ handlebody homeomorphic to $\overline{V - \N(D)}$, and $W_1 = \overline{V_0 -V_1}$ is a compression body, which is obtained by attaching a $2$-handle $\N(E)$ to the product of $\Sigma_1$ and a closed interval.
Therefore we have a Heegaard splitting $(V_1, W_1; \Sigma_1)$ of $V_0$.
Extending the disk $E$ to $\overline{E}$ so that $\partial \overline{E} \subset \Sigma_1$, we have the unique complete disk system $\{ \overline{E} \}$ for $W_1$.
The disk $D_1$ is a compressing disk for $\Sigma_0$ in $V_0$.
By Lemma \ref{lem3.1}, there exists a compressing disk $D^{\ast}_1$ for $\Sigma_0$ such that $D^{\ast}_1$ is isotopic to $D_1$ in $V_0$, and $D^{\ast}_1$ intersects $\Sigma_1$ in a single circle, and $D^{\ast}_1 \cap \overline{E} = \emptyset$.
The new disk $D^{\ast}_1$ still might intersect $\N(E)$.
We will isotope $D^{\ast}_1$ to a disk $D^{\ast\ast}_1$ disjoint from $\N(E)$.
See Figure \ref{fig5}.

Considering the $2$-handle $\N(E)$ as the product $E \times [0, 1]$ with $E = E \times \{\frac{1}{2}\}$, we have two copies $E \times \{ 0 \}$ and $E \times \{ 1 \}$ of the disk $E$ lying in $\Sigma_0$.
We may assume that $\partial D^{\ast}_1$ does not intersect $E \times \{ 0 \}$ and $E \times \{ 1 \}$ by an isotopy.
We extend the product $E \times [0, 1]$ to $\overline{E} \times [0, 1]$ so that $\overline{E} \times \{ 0 \} = E \times \{ 0 \}$, $\overline{E} \times \{ 1 \} = E \times \{ 1 \}$ and $\overline{E} \times \{ \frac{1}{2} \} = \overline{E}$ as described in Figure \ref{fig5}.
If $D^{\ast}_1 \cap (\overline{E} \times [0, 1]) = \emptyset$, let $D^{\ast\ast}_1$ be $D^{\ast}_1$, which is disjoint from $\N(E)$ and so we are done.

\begin{figure}[!hbt]
\includegraphics[width=14.5 cm,clip]{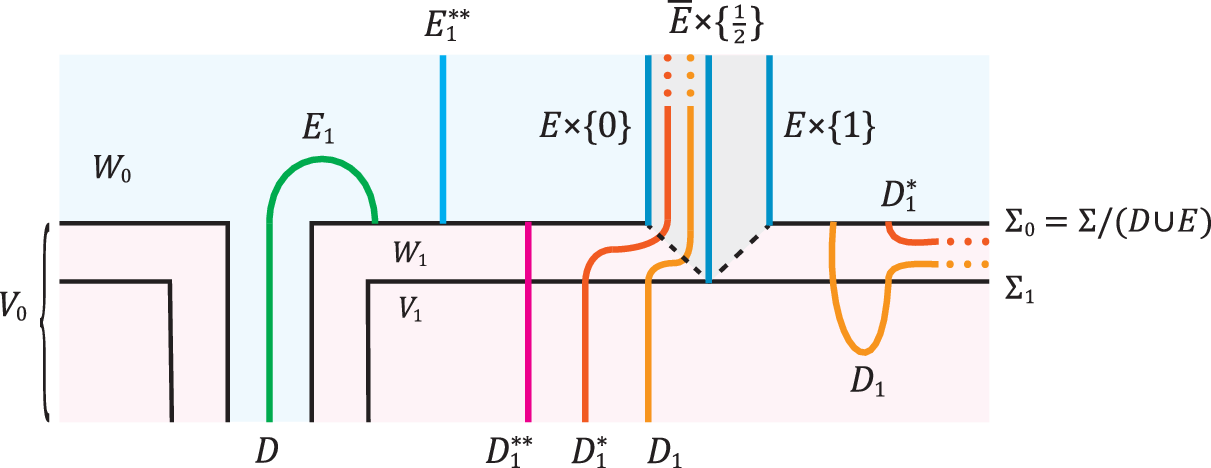}
\caption{The disks $D^{\ast\ast}_1$ in $V_0$ and $E^{\ast\ast}_1$ in $W_0$.}
\label{fig5}
\end{figure}

Suppose that $D^{\ast}_1 \cap (\overline{E} \times [0, 1]) \ne \emptyset$.
Let $A$ be the annulus $D^{\ast}_1 \cap W_1$ and let $F$ be the disk $D^{\ast}_1 \cap V_1$.
Then $D^{\ast}_1 = A \cup F$ and $\alpha = A \cap F$ is a circle in $\Sigma_1$.
Since $D^{\ast}_1$ is disjoint from $\overline{E} \times \{0\}$, $\overline{E} \times \{1\}$ and $\overline{E} \times \{ \frac{1}{2} \}$, the disk $D^{\ast}_1$ intersects $\overline{E} \times [0, 1]$ only in $\overline{E} \times (0, \frac{1}{2}) \cup \overline{E} \times (\frac{1}{2}, 1)$.

First assume that $D^{\ast}_1$ intersects $\overline{E} \times (0, \frac{1}{2})$.
If we cut $W_1$ along $\overline{E} \times \{ \frac{1}{2} \}$, then we get a $3$-manifold $W_2$ homeomorphic to the product $\Sigma_0 \times [0,1]$.
We regard $W_2$ as equipped with the product foliation whose leaves are the surfaces $\Sigma_0 \times \{t\}$. 
Let $p$ be the center point of $\overline{E} \times \{ \frac{1}{2} \}$.
Then the vertical arc $\{ p \} \times [ 0, \frac{1}{2} ]$ can be considered as a {\em transverse arc}, that is, the arc intersects each leaf of the foliation in a single point. 
Moreover, the annulus $A = D^{\ast}_1 \cap W_1$ can be also considered as a {\em transverse annulus} in $W_2$, that is, $A$ intersects each leaf in a single circle.
Thus, the annulus $A$ can be isotoped in $W_2$ to be disjoint from the transverse arc $\{ p \} \times [ 0, \frac{1}{2} ]$, fixing the boundary component $\alpha$ on $\Sigma_1$.
Since the product $\overline{E} \times [ 0, \frac{1}{2} ]$ can be regarded as a regular neighborhood of the arc $\{ p \} \times [ 0, \frac{1}{2} ]$ in $W_2$, the annulus $A$ can be isotoped further, fixing $\alpha$, to an annulus $A'$ that is disjoint from $\overline{E} \times ( 0, \frac{1}{2} )$.
The resulting disk $A' \cup F$ is isotopic to $D^{\ast}_1$, and is disjoint from $\overline{E} \times (0, \frac{1}{2})$.

If the disk $A' \cup F$ is disjoint from $\overline{E} \times (\frac{1}{2}, 1)$, then let $D^{\ast\ast}_1$ be $A' \cup F$.
If the disk $A' \cup F$ still intersects $\overline{E} \times (\frac{1}{2}, 1)$, then, by a similar argument, we eventually find a disk $D^{\ast\ast}_1$, which is isotopic to $D^{\ast}_1$ and even disjoint from $\overline{E} \times (\frac{1}{2}, 1)$. 
The disk $D^{\ast\ast}_1$ is disjoint from $\overline{E} \times [0, 1]$, and hence from $\N(E)$ as desired.

\medskip

By a symmetric argument, there exists a compressing disk $E^{\ast\ast}_1$ in $W_0$ such that $E^{\ast\ast}_1$ is isotopic to $E_1$ in $W_0$, and $E^{\ast\ast}_1$ is disjoint from $\N(D)$.
\end{proof}

For convenience, we change the notations $D^{\ast\ast}_1$ and $E^{\ast\ast}_1$ to $D^{\ast}_1$ and $E^{\ast}_1$, respectively.
Now, the new disks $D^{\ast}_1$ and $E^{\ast}_1$ can be regarded as essential disks in $V$ and $W$, respectively, and they are disjoint from $D \cup E$.
But, $D^{\ast}_1$ and $E^{\ast}_1$ might intersect each other on their boundaries.

\begin{claim}\label{claim2}
After modifying $D^{\ast}_1$ and $E^{\ast}_1$ by taking band sums with $D$ and $E$, respectively, and then applying an isotopy, they become disks in $V$ and $W$ that are disjoint from each other.
\end{claim}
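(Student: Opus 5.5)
The plan is to induct on $|\partial D^{\ast}_1 \cap \partial E^{\ast}_1|$, working in the genus-$2$ surface $\Sigma_0$ with its four scars: two from $D$, two from $E$. For each bigon between the two curves, I first use band sums to move the scars out of the bigon, then isotope the bigon away.

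\textbf{Setup.} By Claim~\ref{claim1} and the construction, $\partial D^{\ast}_1$ and $\partial E^{\ast}_1$ lie in $\Sigma_0$ and avoid all four scars. So they can be regarded as curves in $\Sigma - \N(\partial D \cup \partial E)$. The disks $D^{\ast}_1$ and $E^{\ast}_1$ were obtained from $D_1$ and $E_1$ by isotopies in $V_0$ and $W_0$, and $\partial D_1 \cap \partial E_1 = \emptyset$. Hence, in the closed surface $\Sigma_0$, the curve $\partial D^{\ast}_1$ is isotopic to a curve disjoint from $\partial E^{\ast}_1$. The isotopies are allowed to sweep across scars, and this is the only source of intersections.

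\textbf{Inductive step.} Put the two curves in minimal position in the punctured surface $\Sigma_0$ minus the scars. If they still intersect, the bigon criterion in the closed surface $\Sigma_0$ gives an innermost bigon $B \subset \Sigma_0$. Its boundary consists of an arc $a \subset \partial D^{\ast}_1$ and an arc $b \subset \partial E^{\ast}_1$. By minimality in the punctured surface, $B$ must contain at least one scar.

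For each scar of $D$ in $B$, I replace $D^{\ast}_1$ by its band sum with a parallel copy of $D$. The band runs along an arc in $B$ from $a$ to that scar. In $\Sigma_0$, this pushes the arc $a$ across the scar disk $D \times \{0\}$ or $D \times \{1\}$. In $V$, it is a band sum with $D$, so the result is still a disk in $V$. Likewise, for each scar of $E$ in $B$, I band $E^{\ast}_1$ with a copy of $E$ along an arc in $B$ from $b$ to that scar, pushing $b$ across the scar.

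I order these moves from the innermost scar outward. The bands stay inside $B$, and $B$ meets neither curve in its interior. Therefore no new intersections of $\partial D^{\ast}_1$ with $\partial E^{\ast}_1$ are created. Each new disk stays disjoint from $D$ and $E$, since a parallel copy is used for the band.

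After the moves, $B$ is a bigon containing no scars, and an isotopy across it removes two intersection points. Induction then yields disjoint disks.

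\textbf{Essentiality.} I still need to check that the modified disks remain essential in $V$ and $W$. Each band sum with $D$ is, as a disk in $V_0$, isotopic to the old one, because it only slides the boundary across a scar disk of $\Sigma_0$ in $V_0$. So the new disk is still a compressing disk for $\Sigma_0$ in $V_0$. Its boundary is essential in $\Sigma_0$, hence essential in $\Sigma$, and so the disk is essential in $V$. The same argument applies to $W$.

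\textbf{Main obstacle.} I expect the delicate point to be the identification just used: a move across a scar in $\Sigma_0$ must be realised exactly by a band sum with $D$ or $E$ in $\Sigma$, on the correct side, while keeping the curves embedded. In particular, a scar of $E$ must be pushed across the $\partial E^{\ast}_1$ side of the bigon, never the $\partial D^{\ast}_1$ side, since a disk in $V$ cannot be banded with $E$. Choosing which side absorbs each scar resolves this. I would check it with a local picture near a scar.
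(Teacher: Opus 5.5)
Your proposal is correct and follows the paper's proof. You take an innermost bigon in $\Sigma_0$, which exists because $\partial D_1 \cap \partial E_1 = \emptyset$. You push $\partial D^{\ast}_1$ across the $D$-scars and $\partial E^{\ast}_1$ across the $E$-scars inside it, realized by band sums with $D$ and $E$, then remove the bigon and induct on $|\partial D^{\ast}_1 \cap \partial E^{\ast}_1|$. Your extra checks (first putting the curves in minimal position in the punctured surface, and verifying that the modified disks stay essential) are sound. The one point to state explicitly is that the $D$-bands and $E$-bands inside the bigon can be chosen mutually disjoint and avoiding the other scars, which is always possible.
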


\begin{proof}[Proof of Claim 2]
Suppose that $\partial D^{\ast}_1 \cap \partial E^{\ast}_1 \ne \emptyset$.
The disk $D^{\ast}_1$ is isotopic to $D_1$ in $V_0$, and the disk $E^{\ast}_1$ is isotopic to $E_1$ in $W_0$.
Since $D_1 - E_1$ is a weak reducing pair for $\Sigma_0$, we have $\partial D_1 \cap \partial E_1 = \emptyset$, and hence the curves $\partial D^{\ast}_1$ and $\partial E^{\ast}_1$ are not intersecting minimally in $\Sigma_0$.
By \cite[Proposition 1.7]{Farb-Margalit}, there exists a bigon disk in $\Sigma_0$ bounded by the union of a subarc of $\partial D^{\ast}_1$ and a subarc of $\partial E^{\ast}_1$.

Let $\Delta$ be an innermost such bigon disk.
The disk $\Delta$ contains some scars of $D$ or $E$.
Isotope $\partial D^{\ast}_1$ across the scar(s) of $D$ in $\Delta$, if any, and $\partial E^{\ast}_1$ across the scar(s) of $E$ in $\Delta$, if any.
These isotopies correspond to a band sum of $D^{\ast}_1$ and $D$ in $V$ and a band sum of $E^{\ast}_1$ and $E$ in $W$, respectively.
Further isotope $\partial D^{\ast}_1$ and $\partial E^{\ast}_1$ to remove $\Delta$.
See Figure \ref{fig6} for an example.
The number of intersection points $| \partial D^{\ast}_1 \cap \partial E^{\ast}_1 |$ is reduced by these isotopies.
By repeating this process, we make $\partial D^{\ast}_1$ and $\partial E^{\ast}_1$ disjoint.
\end{proof}

\begin{figure}[!hbt]
\includegraphics[width=14 cm,clip]{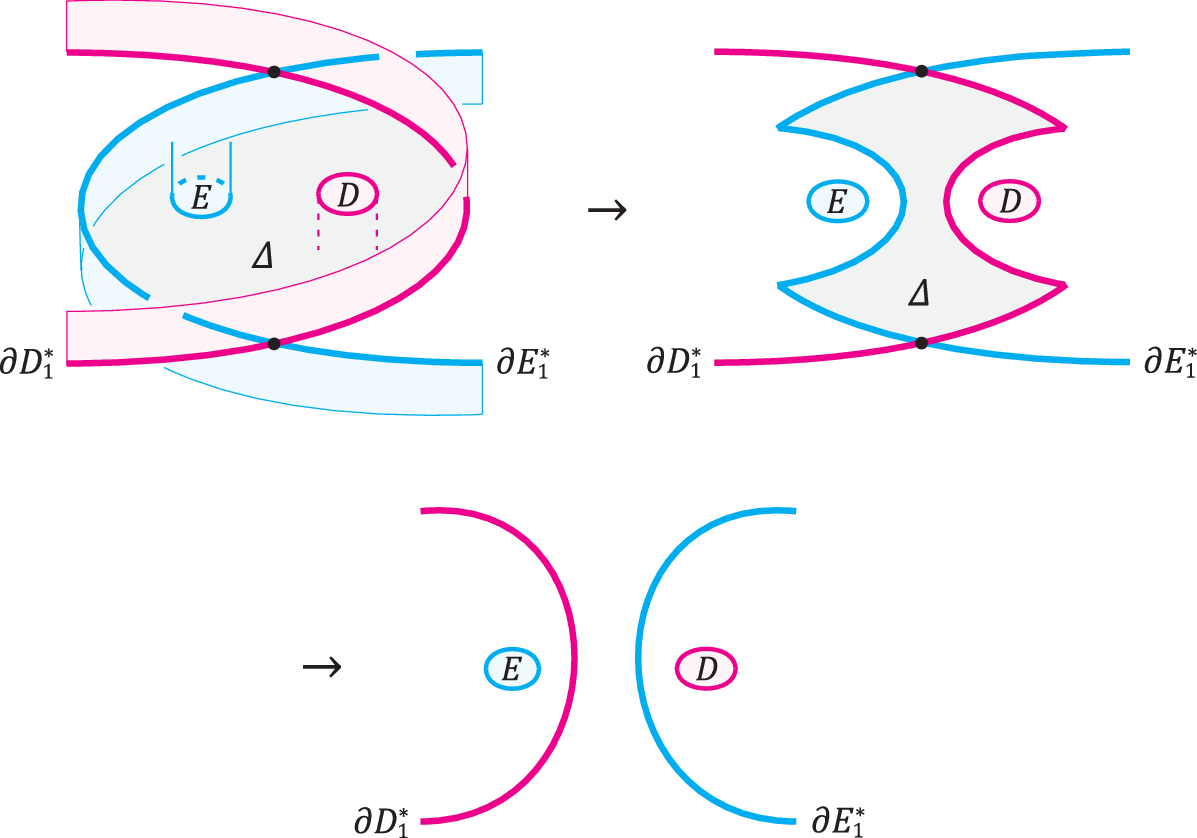}
\caption{Isotopies of $\partial D^{\ast}_1$ and $\partial E^{\ast}_1$.}
\label{fig6}
\end{figure}

By Claim \ref{claim2}, we may assume that $\partial D^{\ast}_1$ and $\partial E^{\ast}_1$ are disjoint essential curves in $\Sigma_0$.
Now what we have obtained are the disjoint compressing disks $D^{\ast}_1 \subset V$ and $E^{\ast}_1 \subset W$ for $\Sigma$ such that 

\begin{itemize}
\item $D^{\ast}_1$ is not isotopic to $D$ in $V$ and $E^{\ast}_1$ is not isotopic to $E$ in $W$, and
\item each of $D^{\ast}_1$ and $E^{\ast}_1$ is disjoint from $D \cup E$.
\end{itemize}

Finally, to find a reducing sphere for $\Sigma$ separating the pair $D - E$, we will investigate the curves $\partial D^{\ast}_1$ and $\partial E^{\ast}_1$ in the surface $\Sigma_0 = \Sigma / (D \cup E)$.
We divide into the following cases according to whether each of $\partial D^{\ast}_1$ and $\partial E^{\ast}_1$ is separating or not in $\Sigma_0$.
In each case, we get a desired reducing sphere for $\Sigma$, or derive a contradiction meaning that case cannot happen.

\medskip

\noindent Case $1$. Both $\partial D^{\ast}_1$ and $\partial E^{\ast}_1$ are non-separating in $\Sigma_0$.

\medskip

\noindent Case $1$-$1$. The curves $\partial D^{\ast}_1$ and $\partial E^{\ast}_1$ are not isotopic in $\Sigma_0$.
If we compress $\Sigma_0$ along $D^{\ast}_1 \cup E^{\ast}_1$, then the result is a $2$-sphere containing all the $8$ scars from $D$, $D^{\ast}_1$, $E$, $E^{\ast}_1$.
By Lemma \ref{lem2.4}, there exists a reducing sphere $P$ for $\Sigma$ such that $P$ cuts $\Sigma$ into two genus-$2$ summands and separates $D \cup D^{\ast}_1$ and $E \cup E^{\ast}_1$, and hence separates the pair $D - E$.

\medskip

\noindent Case $1$-$2$. The curves $\partial D^{\ast}_1$ and $\partial E^{\ast}_1$ are isotopic in $\Sigma_0$.
Let $A$ be the annulus in $\Sigma_0$ bounded by $\partial D^{\ast}_1 \cup \partial E^{\ast}_1$.
Then, a component of $\Sigma_0 / (D^{\ast}_1 \cup E^{\ast}_1)$ is a non-separating $2$-sphere in $S^3$, a contradiction.

\medskip

\noindent Case $2$. The curve  $\partial D^{\ast}_1$ is separating and the curve $\partial E^{\ast}_1$ is non-separating in $\Sigma_0$.
Let $S$ be the $2$-sphere component of $\Sigma_0 / (D^{\ast}_1 \cup E^{\ast}_1)$, which may contain $k$ scars from $E$, where $0 \le k \le 2$.

\medskip

\noindent Case $2$-$1$. $k = 0$.
In this case, $S$ contains no scar of $E$, two scars of $E^{\ast}_1$, and one scar of $D^{\ast}_1$.
Let $\gamma$ be a circle in $S$ separating the two scars of $E^{\ast}_1$ and the rest of the scars on $S$.
By Lemma \ref{lem2.4}, there exists a reducing sphere $P$ for $\Sigma$ such that $P \cap \Sigma = \gamma$.
The sphere $P$ cuts $\Sigma$ into a genus-$1$ summand and a genus-$3$ summand.
The genus-$1$ summand contains only the boundary circle $\partial E^{\ast}_1$, and so $\partial D$ and $\partial E$ are contained in the genus-$3$ summand.
Since the union of the genus-$3$ summand and the disk $P \cap V$ (or $P \cap W$) is a genus-$3$ Heegaard surface containing $\partial D$ and $\partial E$, at least one of $D$ and $E$ is a primitive disk by Lemma \ref{lem2.5}, a contradiction to our assumption.

\medskip

\noindent Case $2$-$2$. $k = 1$.
The $2$-sphere $S$ may contain $0$, $1$ or $2$ scars of $D$.
Suppose that $S$ contains no scar of $D$.
Then a component of $\Sigma / (D^{\ast}_1 \cup E)$ is a non-separating torus in $S^3$, a contradiction.
Suppose that $S$ contains one scar of $D$.
Then a component of $\Sigma / (D^{\ast}_1 \cup D \cup E)$ is a non-separating torus in $S^3$, a contradiction.
Suppose that $S$ contains two scars of $D$.
Then a component of $\Sigma / (D^{\ast}_1 \cup E)$ is a non-separating genus-$2$ surface in $S^3$, a contradiction.

\medskip

\noindent Case $2$-$3$. $k = 2$.
In this case, $S$ contains two scars of $E$, two scars of $E^{\ast}_1$, and one scar of $D^{\ast}_1$. 
Let $\gamma$ be a circle in $S$ separating the four scars of $E$ and $E^{\ast}_1$, and the rest of the scars on $S$.
By Lemma \ref{lem2.4}, there exists a reducing sphere $P$ for $\Sigma$ such that $P \cap \Sigma = \gamma$.
Then $P$ cuts $\Sigma$ into two genus-$2$ summands and separates $D \cup D^{\ast}_1$ and $E \cup E^{\ast}_1$, and hence separates the pair $D - E$.

\medskip

\noindent Case $3$. The curve $\partial E^{\ast}_1$ is separating and the curve $\partial D^{\ast}_1$ is non-separating in $\Sigma_0$.
Subcases in this case are largely symmetric to and similar to those of Case $2$. 
Considering the $2$-sphere component of $\Sigma_0 / (E^{\ast}_1 \cup D^{\ast}_1)$, which contains $k$ scars from $D$, where $0 \le k \le 2$, we have the similar conclusion to Case $2$.

\medskip

\noindent Case $4$. Both $\partial D^{\ast}_1$ and $\partial E^{\ast}_1$ are separating in $\Sigma_0$.
Since $\partial D^{\ast}_1$ and $\partial E^{\ast}_1$ are disjoint essential separating curves in a genus-$2$ surface, they are parallel.
Let $A$ be the annulus in $\Sigma_0$ bounded by $\partial D^{\ast}_1 \cup \partial E^{\ast}_1$.
The annulus $A$ may or may not contain scars of $D$ or $E$.
Suppose first that there is no scar of $D$ and $E$ in $A$. 
Choose in this case an essential circle $\gamma$ inside $A$.
Suppose next that there exist $k$ scars of $D$ or $E$ in $A$, where $1 \le k \le 4$.
Choose in this case a circle $\gamma$ inside $A$ separating the two unions; the union of $\partial D^{\ast}_1$ and the scars of $D$ in $A$ and the union of $\partial E^{\ast}_1$ and the scars of $E$ in $A$.
In any case, the circle $\gamma$ bounds a disk in $V$ that is disjoint from $D^{\ast}_1$ and $D$, and also bounds a disk in $W$ that is disjoint from $E^{\ast}_1$ and $E$.
Each component of $\Sigma - \gamma$ has genus at least $1$.
So the circle $\gamma$ is the intersection of $\Sigma$ and some reducing sphere $P$ for $\Sigma$ that is disjoint from $D^{\ast}_1 \cup E^{\ast}_1 \cup D \cup E$.

If $D$ and $E$ are lying in the opposite sides of $P$, then $P$ cuts $\Sigma$ into two genus-$2$ summands and separates the pair $D - E$ as desired.
If $D$ and $E$ are lying in the same side of $P$, then $P$ cuts $\Sigma$ into genus-$1$ and genus-$3$ summands, and $\partial D$ and $\partial E$ must lie in the genus-$3$ summand.
Since the union of the genus-$3$ summand and the disk $P \cap V$ (or $P \cap W$) is a genus-$3$ Heegaard surface containing $\partial D$ and $\partial E$, at least one of $D$ and $E$ is a primitive disk by Lemma \ref{lem2.5}, a contradiction to our assumption.
\end{proof}

\section{Proof of Theorem \ref{thm1.2}}\label{sec7}

Prior to proving Theorem \ref{thm1.2}, we first prove the following lemma, which is almost a direct consequence of Lemma \ref{lem1.1}.

\begin{lemma}\label{lem7.1} 
Let $(V, W; \Sigma)$ be the genus-$4$ Heegaard splitting of $S^3$.
Let $D - E$ be a non-separating weak reducing pair for $\Sigma$ such that $D \subset V$ and $E \subset W$.
If there exist non-separating compressing disks $D_0 \subset V$ and $E_0 \subset W$, respectively, such that 
\begin{itemize}
\item $D_0$ is not isotopic to $D$ in $V$ and $E_0$ is not isotopic to $E$ in $W$, and
\item each of $D_0$ and $E_0$ is disjoint from $D \cup E$, 
\end{itemize}
then there exists a reducing sphere $P$ that separates the pair $D - E$.
See Figure \ref{fig4}.
\end{lemma}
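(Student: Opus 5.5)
The plan is to reduce Lemma~\ref{lem7.1} to Lemma~\ref{lem1.1}: it suffices to check that $\Sigma_0 = \Sigma/(D \cup E)$ is compressible to both sides, using $D_0$ and $E_0$ as the compressing disks. As in the proof of Lemma~\ref{lem1.1}, we may first dispose of the case where $D$ or $E$ is primitive. Indeed, if $D$ is primitive, Lemma~\ref{lem2.1} gives a dual disk $\Delta \subset W$ of $D$ disjoint from $E$, and the reducing sphere associated with $(D,\Delta)$ separates $D - E$. The case where $E$ is primitive is symmetric. This step is not really needed, but it keeps us in the same setting as in Section~\ref{sec6}.

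Since $D_0$ and $E_0$ are disjoint from $D \cup E$, their boundaries are curves in $\Sigma_0$ avoiding the scars of $D$ and $E$. We use that $\Sigma_0$ is a connected genus-$2$ surface, which holds by the argument at the beginning of Section~\ref{sec6}. The disk $D_0$ lies in $\overline{V - \N(D)} \subset V_0 = \overline{V - \N(D)} \cup \N(E)$, so it is a disk in $V_0$ with boundary on $\Sigma_0$. Similarly, $E_0$ is a disk in $W_0$ with boundary on $\Sigma_0$.

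The main point is to show that $\partial D_0$ is essential in $\Sigma_0$, and likewise $\partial E_0$. Suppose $\partial D_0$ bounds a disk $B$ in $\Sigma_0$. Then $B$ contains some of the scars of $D$ and $E$. We argue by cases on which scars $B$ contains.

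\begin{itemize}
\item If $B$ contains no scar, then $\partial D_0$ is inessential in $\Sigma$. This contradicts $D_0$ being essential.
\item If $B$ contains exactly one scar of $D$, then $\partial D_0$ cobounds an annulus with $\partial D$ in $\Sigma$. Hence $D_0$ is isotopic to $D$, which is excluded.
\item If $B$ contains both scars of $D$ and no scar of $E$, then $\partial D_0$ separates $\Sigma$ off a punctured torus. This contradicts $D_0$ being non-separating. The same applies whenever $B$ contains an even configuration making $\partial D_0$ separating in $\Sigma$.
\item If $B$ contains scars of $E$, the same bookkeeping applies. Containing a single scar of $E$ (with or without a scar of $D$) would force $\partial D_0$ to meet a non-separating curve in a way producing a non-separating sphere or torus in $S^3$. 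Compare Case~$2$-$2$ in Section~\ref{sec6}: compressing $\Sigma$ along $D_0$, $D$ and $E$ yields a non-separating closed surface in $S^3$, which is a contradiction.
\end{itemize}

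More cleanly, a curve in $\Sigma - (\partial D \cup \partial E)$ bounding a disk in $\Sigma_0$ is, in $\Sigma$, homologous to a sum of the classes $[\partial D]$ and $[\partial E]$ determined by the scars enclosed. Since $\partial D_0$ bounds a disk in $V$, it is null in $H_1(V)$. The allowed combinations therefore give either an inessential curve, a curve parallel to $\partial D$, or a separating curve, and all three are excluded by the hypotheses on $D_0$. I expect this case analysis to be the main obstacle, specifically the case where $B$ contains scars of both $D$ and $E$.

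Having shown that $\partial D_0$ and $\partial E_0$ are essential in $\Sigma_0$, we conclude that $D_0$ is a compressing disk for $\Sigma_0$ in $V_0$ and $E_0$ is one in $W_0$. So $\Sigma_0$ is compressible to both sides, and Lemma~\ref{lem1.1} yields the reducing sphere $P$ separating $D - E$.
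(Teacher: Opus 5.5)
Your overall strategy matches the paper's: dispose of the primitive case, show that $\partial D_0$ and $\partial E_0$ are essential in $\Sigma_0$, and invoke Lemma~\ref{lem1.1}. The essentiality step, however, has a genuine gap.

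Part of your homological bookkeeping is sound. It needs the fact that $[\partial E]\neq 0$ in $H_1(V)$, which holds because $H_1(\Sigma)\to H_1(V)\oplus H_1(W)$ is injective (as $H_2(S^3)=0$) and $\partial E$ is non-separating. Granting this, it forces $B$ to contain either no scar or both scars of $E$. That disposes of every configuration with exactly one $E$-scar, and does so more uniformly than the paper's non-separating-sphere arguments.

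But one configuration survives: $B$ contains one scar of $D$ and both scars of $E$. Then $[\partial D_0]=\pm[\partial D]$, yet $\partial D_0$ is not parallel to $\partial D$. The two curves cobound a genus-one subsurface of $\Sigma$ with two boundary components, and this subsurface contains $\partial E$. In this case $\partial D_0$ is non-separating, non-isotopic to $\partial D$, and disjoint from $D\cup E$. So your claim that the remaining combinations give only an inessential, a $\partial D$-parallel, or a separating curve is false, and the hypotheses on $D_0$ do not exclude this case.

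The missing ingredient is the non-primitivity of $E$, which you dismissed as ``not really needed.'' The paper handles this case in its Case~3, stated for $E_0$ with the roles of $D$ and $E$ exchanged:
\begin{enumerate}
\item Take a circle $\gamma\subset B$ separating the two $E$-scars from the $D$-scar.
\item Apply Lemma~\ref{lem2.4} to $\{D,E,D_0\}$. This gives a reducing sphere $P$ with $P\cap\Sigma=\gamma$ that cuts off a genus-one summand.
\item In that summand, $E$ is a meridian disk of the solid torus cut off from $W$, so $E$ is primitive, contradicting the standing assumption.
\end{enumerate}

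This reduction is indispensable, not a convenience. Suppose $E$ is primitive:
\begin{enumerate}
\item Choose a dual disk $\Delta$ of $E$ disjoint from $D$ (Lemma~\ref{lem2.1}), and let $P$ be the reducing sphere associated with $(E,\Delta)$.
\item Band $D$ to the separating disk $P\cap V$ along an arc on the genus-three side of $P\cap\Sigma$.
\end{enumerate}
The resulting $D_0$ satisfies every hypothesis of the lemma. Yet $\partial D_0$ bounds a disk in $\Sigma_0$ containing one scar of $D$ and both scars of $E$, so $D_0$ gives no compression of $\Sigma_0$. Symmetrically, for $E_0$ the configuration with one scar of $E$ and both scars of $D$ must be ruled out using the non-primitivity of $D$.
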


\begin{figure}[!hbt]
\includegraphics[width=3.2cm,clip]{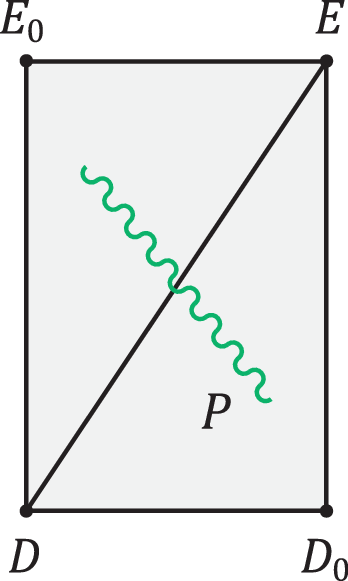}
\caption{A reducing sphere $P$ for $\Sigma$ separating $D$ and $E$.}
\label{fig4}
\end{figure}

We note that possibly the disks $D_0$ and $E_0$ in the statement might intersect each other.

\begin{proof}
If one of $D$ and $E$, say $D$, is primitive, then by Lemma \ref{lem2.1} we have a dual disk $\Delta \subset W$ of $D$ that is disjoint from $E$.
Then a reducing sphere $P$ for $\Sigma$ associated with $(D, \Delta)$ separates the pair $D - E$ as desired.
In this case, we do not need the disks $D_0$ and $E_0$.
From now on, we assume that both $D$ and $E$ are non-primitive.

\medskip

To have the conclusion, it is enough to show that both curves $\partial E_0$ and $\partial D_0$ are essential in the genus-$2$ surface $\Sigma_0 = \Sigma / (D \cup E)$.
If then, both disks $D_0$ and $E_0$ are compressing disks for $\Sigma_0$, which means $\Sigma_0$ is compressible to both sides.
Thus Lemma \ref{lem1.1} implies the existence of a reducing sphere $P$ separating the pair $D - E$.

\medskip

Suppose that $\partial E_0$ is inessential in $\Sigma_0$.
Let $\Delta$ be the disk in $\Sigma_0$ bounded by $\partial E_0$.
The disk $\Delta$ contains $k$ scars from $D$ or from $E$, where $1 \le k \le 4$.
We consider the cases in turn and obtain a contradiction in each case.

\medskip

\noindent Case $1$. $k = 1$. 
If $\Delta$ contains one scar of $D$, then the non-separating curve $\partial E_0$ in $\Sigma$ bounds a disk both in $V$ and $W$, a contradiction.
If $\Delta$ contains one scar of $E$, then it contradicts the assumption that $E_0$ is non-isotopic to $E$.

\medskip

\noindent Case $2$. $k = 2$.
If $\Delta$ contains two scars of $D$ or two scars of $E$, then it contradicts that $\partial E_0$ is non-separating in $\Sigma$.
If $\Delta$ contains one scar of $D$ and one scar of $E$, then a component of $\Sigma_0 / E_0$ is a non-separating $2$-sphere in $S^3$, a contradiction.

\medskip

\noindent Case $3$. $k = 3$.
Suppose that $\Delta$ contains two scars of $D$ and one scar of $E$.
Let $\gamma$ be a circle inside $\Delta$ separating the two scars of $D$ and the one scar of $E$.
By Lemma \ref{lem2.4}, there exists a reducing sphere $P$ such that $P \cap \Sigma = \gamma$ and $P$ separates $D$ and $E \cup E_0$.
The sphere $P$ cuts off a solid torus from $V$, and the disk $D$ lies in this solid torus.
It means that $D$ is a primitive disk, a contradiction to our assumption.
If $\Delta$ contains two scars of $E$ and one scar of $D$, then a component of $\Sigma_0 / E_0$ is a non-separating $2$-sphere in $S^3$, a contradiction.

\medskip

\noindent Case $4$. $k = 4$.
In this case, the disk $\Delta$ contains all the scars, two scars of $D$ and two scars of $E$. 
Then $\partial E_0$ is separating in $\Sigma$, a contradiction. 

\medskip

By a similar argument, we can see that $\partial D_0$ is also essential in $\Sigma_0$.
\end{proof}

\begin{proof}[Proof of Theorem \ref{thm1.2}]
The necessity is straightforward.
For sufficiency, let $P$ and $Q$ be any two reducing spheres for $\Sigma$.
We will show that $P \sim Q$ under the assumption of the theorem.
We first choose primitive disks $D$ and $D'$ in $V$ that are disjoint from $P$ and $Q$, respectively.
If $D$ and $D'$ are isotopic, then $P \sim Q$ by Lemma \ref{lem2.3}, so we may assume that $D$ and $D'$ are not isotopic.
Both $D$ and $D'$ are weak reducing disks for $\Sigma$.
Since the disk complex $\mathcal{D}(\Sigma)$ is simply connected \cite[Appendix A]{CKL24}, i.e. $\pi_1(\mathcal{D} (\Sigma)) = 1$, $\Sigma$ is not a critical surface.
By Lemma \ref{lem4.1}, there exists a weak reducing sequence $D = \Delta_1 - \Delta_2 - \cdots - \Delta_k = D'$ connecting $D$ to $D'$.
We take such a sequence of minimal length, that is, $\Delta_i$ and $\Delta_{i+2}$ are not isotopic for each $i \in \{1, 2, \ldots, k-2\}$.
By applying Lemma \ref{lem5.2} and Lemma \ref{lem5.3} repeatedly if necessary, we obtain a weak reducing sequence 
$$D = D_1 - E_2 - D_3 - \cdots - E_{2n} - D_{2n+1} = D'$$
consisting entirely of non-separating disks.
Again we take such a sequence of minimal length.
That is, we assume that $D_i$ and $D_{i+2}$ are not isotopic and $E_j$ and $E_{j+2}$ are not isotopic for any $i$ and $j$.

\medskip

\noindent {\it Claim}. There exists a reducing sphere $P_i$ for $\Sigma$ such that $P_i$ separates the weak reducing pair $D_i - E_{i+1}$ for each $i \in \{1, 3, \ldots, 2n-1\}$.
Similarly, there exists a reducing sphere $P_j$ for $\Sigma$ such that $P_j$ separates the weak reducing pair $E_j - D_{j+1}$ for each $j \in \{2, 4, \ldots, 2n\}$.

\begin{proof}[Proof of Claim]
It suffices to prove the first statement, namely the case of $D_i - E_{i+1}$ for each $i \in \{1, 3, \ldots, 2n-1\}$, since the second statement follows by symmetry.

First, consider $E_{i-1} - D_i - E_{i+1}$ in the sequence for each $i \in \{3, 5, \ldots, 2n-1\}$.
Suppose that $E_{i-1} \cap E_{i+1} \ne \emptyset$.
We assume that $E_{i-1}$ and $E_{i+1}$ intersect minimally. 
Let $\Delta$ be an outermost subdisk of $E_{i-1}$ cut off by an arc of $E_{i-1} \cap E_{i+1}$ that is outermost in $E_{i-1}$.
Since $E_{i+1}$ is a non-separating disk, at least one of the two disks, denoted by $E_0$, obtained by a surgery of $E_{i+1}$ along $\Delta$ is a non-separating disk.
The disk $E_0$ is disjoint from $E_{i+1}$, and is also disjoint from $D_i$ because both $E_{i-1}$ and $E_{i+1}$ are disjoint from $D_i$.
By the minimality of $| E_{i-1} \cap E_{i+1} |$, $E_0$ is non-isotopic to $E_{i+1}$.
If $E_{i-1} \cap E_{i+1} = \emptyset$, then let $E_0 = E_{i-1}$.
Then $E_0$ is a non-separating disk that is disjoint from $D_i$ and $E_{i+1}$, and non-isotopic to $E_{i+1}$.
Considering $D_i - E_{i+1} - D_{i+2}$ in the sequence for each $i \in \{3, 5, \ldots, 2n-1\}$ in a similar way, we also have a non-separating disk $D_0 \subset V$ that is disjoint from $D_i$ and $E_{i+1}$, and non-isotopic to $D_i$.

By applying Lemma \ref{lem7.1} to $\{ D_i, E_{i+1}, D_0, E_0 \}$, there exists a reducing sphere $P_i$ for $\Sigma$ such that $P_i$ separates $D_i - E_{i+1}$.

Next, consider $D_1 - E_2$ in the sequence.
By Lemma \ref{lem2.1}, there exists a dual disk $\Delta \subset W$ of $D_1$ that is disjoint from $E_2$.
Let $P_1$ be a reducing sphere for $\Sigma$ associated with $(D_1, \Delta)$.
Then $P_1$ separates $D_1 - E_2$.
\end{proof}

For each $k \in \{1, 3, \ldots, 2n-1\}$, let $P_k$ and $P_{k+1}$ be the reducing spheres from the claim, i.e. $P_k$ separates $D_k - E_{k+1}$, and $P_{k+1}$ separates $E_{k+1} - D_{k+2}$.
Both $P_k$ and $P_{k+1}$ are disjoint from the non-separating disk $E_{k+1}$.
By the assumption, we have $P_k \sim P_{k+1}$.

For each $k \in \{2, 4, \ldots, 2n-2\}$, let $P_k$ and $P_{k+1}$ be the reducing spheres from the claim, i.e. $P_k$ separates $E_k - D_{k+1}$, and $P_{k+1}$ separates $D_{k+1} - E_{k+2}$.
Both $P_k$ and $P_{k+1}$ are disjoint from the non-separating disk $D_{k+1}$.
By the assumption again, we have $P_k \sim P_{k+1}$.

Thus we see that $P_1 \sim P_{2n}$.
Furthermore, since both of the disks $D$($= D_1$) and $D'$($= D_{2n+1}$) are primitive, $P \sim P_1$ and $P_{2n} \sim Q$ by Lemma \ref{lem2.3}.
We conclude that $P \sim Q$.
\end{proof}

\bibliographystyle{amsplain}

\begin{thebibliography}{10}

\bibitem{Bachman1} D. Bachman,
Critical Heegaard surfaces,
Trans. Amer. Math. Soc. \textbf{354} (2002), no. 10, 4015--4042.

\bibitem{Bachman2} D. Bachman,
Connected sums of unstabilizd Heegaard splittings are unstabilized,
Geom. Topol. \textbf{12} (2008), no. 4, 2327--2378.

\bibitem{Bachman3} D. Bachman,
Topological index theory for surfaces in $3$-manifolds,
Geom. Topol. \textbf{14} (2010), no. 1, 585--609.

\bibitem{Casson-Gordon} A. J. Casson and C. McA. Gordon,
Reducing Heegaard splittings,
Topology Appl. \textbf{27} (1987), no. 3, 275--283.

\bibitem{CKL24} S. Cho, Y. Koda, and J. H. Lee,
The Powell Conjecture for the genus-three Heegaard splitting of the $3$-sphere,
Proc. Amer. Math. Soc. \textbf{154} (2026), no. 5, 2195--2208.

\bibitem{CKL25} S. Cho, Y. Koda, and J. H. Lee,
Reducing spheres and weak reducing pairs for Heegaard surfaces in the $3$-sphere,
arXiv:2509.24388.

\bibitem{Farb-Margalit} B. Farb and D. Margalit,
A primer on mapping class groups, 
Princeton Mathematical Series, 49, Princeton University Press, Princeton, NJ, 2012.

\bibitem{Gordon} C. McA. Gordon,
On primitive sets of loops in the boundary of a handlebody,
Topology Appl. \textbf{27} (1987), no. 3, 285--299.

\bibitem{Wal68} F. Waldhausen,
Heegaard-Zerlegungen der $3$-Sph\"{a}re, 
Topology \textbf{7} (1968), 195--203.

\bibitem{Zupan} A. Zupan,
The Powell conjecture and reducing sphere complexes,
J. Lond. Math. Soc. (2) \textbf{101} (2020), no. 1, 328--348.

\end{thebibliography}

\end{document}